\def\R{\mathbb R}
\def\es{\mathbb S}
\def\s{\mathcal S}
\def\a{\mathcal A}
\def\b{\mathcal B}
\def\g{\mathcal G}
\def\r{\mathcal R}
\def\h{\mathcal H}
\def\w{\mathcal W}
\def\v{\mathcal V}
\newtheoremstyle{plain}{10pt}{10pt}{\it}{0pt}{\bf}{.}{0.5em}{}
\newtheorem{twr}{Theorem}
\newtheorem{lemat}[twr]{Lemma}
\newtheorem{klaim}[twr]{Claim}
\newtheoremstyle{definition}{10pt}{10pt}{}{0pt}{\bf}{.}{0.5em}{}
\theoremstyle{definition}
\newtheorem{problem}[twr]{Problem}
\newtheorem{df}[twr]{Definition}
\newtheorem{uwaga}[twr]{Remark}
\author{\Large Justyna~Ogorzały\footnote{Institute of Mathematics, Jagiellonian University in Krak\'ow, ul. prof. S. Lojasiewicza 6, 30-348, 
Kraków, Poland, {\bf justyna.ogorzaly@gmail.com}}} 
\title{\bf \textsc{Quasistatic contact problem with unilateral constraint for elastic-viscoplastic materials}}
\begin{document}
\maketitle

{\bf Abstract.} This paper consists of two parts. In the first part we prove the unique solvability for the abstract variational-hemivariational inequality with history-dependent operator. The proof is based on the existing result for the static variational-hemivariational inequality and a fixed point argument. In the second part, we consider a mathematical model which describes quasistatic frictional contact between a deformable body and a rigid foundation. In the model the material behaviour is modelled by an elastic-viscoplastic constitutive law. The contact is described with a normal damped response, unilateral constraint and memory term. In the analysis of this model we use the abstract result from the first part of the paper.
\\
{\bf Key words:} variational-hemivariational inequality, history-dependent operator,  frictional contact, elastic-viscoplastic material, normal damped response
\\
{\bf Mathematics Subject Classification} (2010): 34G20, 47J20, 47J22, 74M10, 74M15, 74H20, 74H25
\section{Introduction}
Many mechanical problems involving nonmonotone, multivalued relations between stresses and strains, between reactions and displacements or between
generalized forces and fluxes. These relations expressed in terms of nonconvex superpotentials (cf. \cite{Pan1, Pan2}) lead to hemivariational inequalities. Let us add, that the nonconvex superpotentials (cf. \cite{clarke}) generalize
the notion of convex superpotential introduced by Moreau \cite{MOR}. The convex superpotentials describe monotone possibly
multivalued mechanical laws and they lead to variational inequalities. The variational-hemivariational inequalities were introduced by Panagiotopoulos and they represent a special class of inequalities, in which both convex and nonconvex functions occur.
These type of inequalities  are a useful tool in the study of nonsmooth variational problems with constraints and boundary value problems with discontinuous nonlinearities. The results associated with variational-hemivariational inequalities and its applications can be found in the monographs, e.g. \cite{CLM, GMDR, GM, NP, Pan}.

The aim of this paper is to study the existence and uniqueness of the solution of the variational-hemivariational inequality with history-dependent operator and to apply obtained result into the analysis of a quasistatic contact problem for elastic-viscoplastic materials. It should be noted that the existence and uniqueness result for the static variational-hemivariational inequality without history-dependent operator is obtained by Mig\'orski et al. in \cite{MOSVHV}. This paper generalizes the result from \cite{MOSVHV}. The first novelty in our work is that we consider the variational-hemivariational
inequality defined on a bounded interval of time. The second novelty related to the special structure of the variational-hemivariational inequality which we consider. Namely, our inequality contains convex and nonconvex functionals and, moreover, it contains so-called history-dependent operator which at any moment $t \in (0,T),$ depend on the history of the solution up to the moment $t$. Furthermore, we present the example of a contact problem which leads to the variational-hemivariational inequality with history-dependent operator. 

The rest of the paper is structured as follows. Section 2 contain notation and definitions. In
Section 3 we consider the abstract problem and we prove it unique solvability. Finally, in Section 4 we apply the result obtained in Sections 3 in the analysis of the contact problem.

\section{Preliminary}
We introduce the notation and we recall some preliminary material which
will be used in the next parts of this paper.

Let $V$ and $X$ are separable and reflexive Banach spaces with the duals $V^*$ and $X^*$, respectively, and $K \subset V$. We consider also the space $\v = L^2(0,T;V),$  where $0 < T < + \infty.$ Moreover, by $\mathcal{L}(V,X)$ we denote a space of linear and bounded operators with a Banach space $V$ with values in a Banach space $X$ with the norm $\Vert \cdot \Vert_{\mathcal{L}(V,X)}.$ The duality pairing between $X^*$ and $X$ is denoted by $\langle \cdot, \cdot \rangle_{X^* \times X},$ whereas the duality pairing between $\v^*$ and $\v$ is given by
$
\langle u,v \rangle_{\v^* \times \v} = \int\limits_{0}^{T}\,\langle u(t), v(t) \rangle_{V^* \times V}\,dt$ for $u \in \v^*, v \in \v.$ If $X$ is a Hilbert space thus the inner product is denoted by $(\cdot,\cdot)_X.$

We use the following concepts of the generalized directional derivative, the Clarke sub\-dif\-feren\-tial and the subgradient of a convex function.
\begin{df}\label{GENERAL}
The generalized directional
derivative (in the sense of Clarke) of a locally Lipschitz function $\varphi \colon X \longrightarrow \mathbb{R}$ at the point $x \in X$ in the direction $v \in X,$ denoted by $\varphi^{0} (x;v)$ is defined by
$$
\varphi^{0} (x;v) = \limsup_{y \to x, \, \lambda \downarrow 0} \frac{\varphi(y + \lambda v) - \varphi (y)}{\lambda}.
$$
\end{df}
\begin{df}\label{DDEF2}
Let $\varphi \colon  X \longrightarrow \mathbb{R}$ be a locally Lipschitz function. The Clarke generalized gradient (subdifferential) of $\varphi$ at $x \in X,$ denoted by $\partial \varphi (x),$ is the subset of a dual space $X^{*}$ defined by
$
\partial \varphi (x) = \lbrace \zeta \in X^{*} \, \vert \, \varphi^{0} (x;v) \geqslant \langle \zeta, v \rangle_{X^{*} \times X} \ \mbox{for all} \ v \in X \rbrace .
$
\end{df}
\begin{df}\label{DDEF1}
Let $\varphi \colon X \to \R \cup \{ +\infty \}$ be a proper,
convex and lower semicontinuous function. 
The subdifferential $\partial \varphi$ is generally a multivalued 
mapping $\partial \varphi \colon X \to 2^{X^*}$ defined by
$
\partial \varphi (x) = \{ \, x^*\in X^* \mid \langle x^*, v -x \rangle_{X^* \times X} 
\leqslant \varphi(v)-\varphi(x) \ \mbox{for all} \ v \in X \, \}
$
for $x \in X$.
The elements of the set $\partial \varphi (x)$ 
are called subgradients of $\varphi$ in $x$.
\end{df}
In this paper by $c$ we will denote a positive constant which can change from line to line. The following lemma is a consequence of the Banach contraction principle.
\begin{lemat}\label{lematkulig}
Let $X$ be a Banach space with a norm $\Vert \cdot \Vert_X$ and $T>0$. Let $\Lambda : L^2 (0,T;X) \longrightarrow L^2 (0,T;X)$ be an operator satisfying
$
\Vert (\Lambda \eta_1)(t) - (\Lambda \eta_2)(t) \Vert^2_X \leqslant c \int\limits_{0}^{t} \Vert \eta_1(s) - \eta_2 (s) \Vert^2_X\,ds
$
for every $\eta_1, \eta_2 \in L^2 (0,T;X),$ a.e. $t \in (0,T).$ Then $\Lambda$ has a unique fixed point in $L^2(0,T;X),$ i.e., there exists a unique $\eta^* \in L^2(0,T;X)$ such that $\Lambda \eta^* = \eta^*.$
\end{lemat}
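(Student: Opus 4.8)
The plan is to show that a sufficiently high iterate $\Lambda^{k}$ of $\Lambda$ is a genuine contraction on the Banach space $L^2(0,T;X)$ equipped with its usual norm, and then to conclude by the classical corollary of the Banach contraction principle: if some power of a self-map of a complete metric space is a contraction, then the map itself has a unique fixed point.

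Fix $\eta_1,\eta_2\in L^2(0,T;X)$ and set $G(t)=\int_{0}^{t}\|\eta_1(s)-\eta_2(s)\|_X^2\,ds$; this function is nonnegative, nondecreasing, and $G(T)=\|\eta_1-\eta_2\|^2_{L^2(0,T;X)}$. Since $\Lambda$ maps $L^2(0,T;X)$ into itself, so does each $\Lambda^n$, and I would first prove by induction on $n\ge 1$ the pointwise estimate
$$
\|(\Lambda^{n}\eta_1)(t)-(\Lambda^{n}\eta_2)(t)\|_X^2\ \le\ \frac{c^{\,n}\,t^{\,n-1}}{(n-1)!}\,G(t)\qquad\text{for a.e. }t\in(0,T).
$$
For $n=1$ this is exactly the hypothesis on $\Lambda$. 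For the induction step I would apply that hypothesis to the pair $\Lambda^{n}\eta_1,\Lambda^{n}\eta_2$, insert the inductive estimate under the integral sign, and use that $G$ is nondecreasing to factor $G(t)$ out; the remaining integral $\int_{0}^{t}s^{\,n-1}\,ds=t^{\,n}/n$ produces precisely the constant $c^{\,n+1}t^{\,n}/n!$, which is the case $n+1$.

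Integrating the displayed inequality over $(0,T)$ and bounding $t^{\,n-1}G(t)\le T^{\,n-1}G(T)$ gives
$$
\|\Lambda^{n}\eta_1-\Lambda^{n}\eta_2\|^2_{L^2(0,T;X)}\ \le\ \frac{c^{\,n}\,T^{\,n}}{(n-1)!}\,\|\eta_1-\eta_2\|^2_{L^2(0,T;X)}.
$$
Since $c^{\,n}T^{\,n}/(n-1)!\to 0$ as $n\to\infty$, one may fix $k$ with $c^{\,k}T^{\,k}/(k-1)!<1$; then $\Lambda^{k}$ is a contraction on the complete space $L^2(0,T;X)$ and hence has a unique fixed point $\eta^*$. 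Finally, since $\Lambda$ commutes with $\Lambda^{k}$, the element $\Lambda\eta^*$ is also a fixed point of $\Lambda^{k}$, so $\Lambda\eta^*=\eta^*$ by uniqueness; and any fixed point of $\Lambda$ is in particular a fixed point of $\Lambda^{k}$, hence equals $\eta^*$. This yields the claim.

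There is essentially no real obstacle here; the one point that must not be botched is obtaining the factorial $(n-1)!$ in the denominator rather than a mere geometric factor $c^{\,n}$ (which would only give a contraction for small $T$), and this is exactly what the induction against the monotone majorant $G$ delivers. As an alternative one could instead equip $L^2(0,T;X)$ with the equivalent Bielecki-type weighted norm $\|\eta\|_\beta^2=\int_{0}^{T}e^{-\beta t}\|\eta(t)\|_X^2\,dt$ and verify directly, by interchanging the order of integration, that $\Lambda$ is a contraction in this norm for $\beta>c$; this is an equally short route to the conclusion.
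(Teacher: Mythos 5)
Your argument is correct: the induction producing the factorial $(n-1)!$, the passage to the $L^2(0,T;X)$ norm, and the standard corollary that a map with a contractive iterate has a unique fixed point are all carried out properly. The paper itself gives no proof, merely noting that the lemma "is a consequence of the Banach contraction principle," and your iterate-contraction argument (or the Bielecki-norm variant you mention) is exactly the standard way that consequence is obtained, so your approach matches the intended one.
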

Now, we recall the concept of the history-dependent operator.
\begin{df}
An operator $\s \colon \v \longrightarrow \v^*$ that satisfies the inequality 
\begin{equation}\label{wS}
\Vert (\s u_1)(t) - (\s u_2) (t) \Vert_{V^*} \leqslant L_{\s} \int\limits_{0}^{t} \Vert u_1 (s) - u_2 (s) \Vert_{V}\, ds
\end{equation}
$
{\rm for}\ u_1, u_2 \in \v,\  {\rm for\ a.e.}\ t \in (0,T)\ {\rm with}\ L_{\s} >0,
$
is called {\textit{the history-dependent operator}}.
\end{df}
The following property of the history-dependent operators will be used later.
\begin{lemat}\label{PwS}
Let $\s_1,\s_2 \colon \v \longrightarrow \v^*$ be the operators which satisfy \eqref{wS}, then the operator $\s \colon \v \longrightarrow \v^*$ given by $(\overline{\s}u)(t) = (\s_1 u)(t) + (\s_2 u)(t)$ for $u \in \v,$ satisfies \eqref{wS}.
\end{lemat}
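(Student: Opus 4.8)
The plan is to verify the defining inequality \eqref{wS} for the operator $\overline{\s}$ directly, by combining the triangle inequality in the norm of $V^*$ with the two hypotheses on $\s_1$ and $\s_2$. No fixed point or functional-analytic machinery is needed here; the statement is purely a stability property of the class of history-dependent operators under addition.

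First I would fix $u_1, u_2 \in \v$ and a.e. $t \in (0,T)$, and expand using the definition of $\overline{\s}$ and the triangle inequality:
\[
\Vert (\overline{\s}u_1)(t) - (\overline{\s}u_2)(t) \Vert_{V^*}
\leqslant \Vert (\s_1 u_1)(t) - (\s_1 u_2)(t) \Vert_{V^*}
+ \Vert (\s_2 u_1)(t) - (\s_2 u_2)(t) \Vert_{V^*}.
\]
Then I would apply \eqref{wS} to each of $\s_1$ and $\s_2$, with their respective constants $L_{\s_1}, L_{\s_2} > 0$, so that each summand on the right is bounded by $L_{\s_i} \int_0^t \Vert u_1(s) - u_2(s) \Vert_V\, ds$. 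Adding the two estimates and setting $L_{\overline{\s}} := L_{\s_1} + L_{\s_2}$, which is strictly positive, yields exactly inequality \eqref{wS} for $\overline{\s}$.

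The only point that requires a word of justification — and it is the closest thing to an obstacle, though a trivial one — is that $\overline{\s}$ is genuinely a well-defined operator $\v \to \v^*$: this follows because $\v^*$ is a vector space, so the pointwise sum $(\s_1 u)(t) + (\s_2 u)(t)$ again belongs to $\v^*$, and measurability/integrability in $t$ is inherited from $\s_1 u$ and $\s_2 u$. One should also remark that the argument extends verbatim, by induction, to any finite sum of history-dependent operators, with constant equal to the sum of the individual constants.
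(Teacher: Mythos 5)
Your argument is correct and is exactly the routine triangle-inequality computation the paper has in mind; the paper simply omits the proof as straightforward, so there is nothing to compare beyond noting that your $L_{\overline{\s}} = L_{\s_1}+L_{\s_2}$ is the intended constant.
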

\begin{proof}
The proof is straightforward so we omit it.
\end{proof}

Finally, we present the result which concerns the existence and uniqueness of the solution of the static variational-hemivariational inequality.  Consider the following abstract problem.
\begin{problem}\label{probabst}
Find an element $u \in V$ such that $u \in K$ and
$$
\langle Au, v - u \rangle_{V^* \times V} + \varphi(u,v) - \varphi(u,u) + J^0(Mu;Mv - Mu) \geqslant \langle f, v -u \rangle_{V^* \times V} \ {\rm for\ all}\ v \in K.
$$
\end{problem}
We introduce the following hypotheses.
\begin{equation}\label{A0}
\left.
\begin{array}{l}
A: V \longrightarrow V^*\ \mbox{is such that}\hspace{0.5cm}\\
\ \ {\rm (a)}\ 
A\ \mbox{is pseudomonotone}.\hspace{0.5cm}\\
\ \ {\rm (b)}\ 
A\ \mbox{is coercive, i.e., there exist}\ \alpha_A > 0, \beta ,\beta_1 \in \R\ \mbox{and}\  u_0 \in K\ \mbox{such that}\hspace{0.5cm}\\
\hspace{1cm}\langle Av, v-u_0 \rangle_{V^* \times V} \geqslant \alpha_A \Vert v \Vert^2_V - \beta \Vert v \Vert_V - \beta_1\ {\rm for\ all} \ v \in V.\hspace{0.5cm}\\
\ \ {\rm (c)}\ 
A\ \mbox{is strongly monotone, i.e., there exists}\ m_A > 0\ \mbox{such that}\hspace{0.5cm}\\
\hspace{1cm} \langle A v_1 - A v_2, v_1 - v_2 \rangle_{V^* \times V} \geqslant m_A \Vert v_1 - v_2 \Vert^2_V \
\mbox{for all}\ v_1, v_2 \in V.\hspace{0.5cm}
\end{array}
\right\}
\end{equation}

\begin{align}\label{fi0}
\left.
\begin{array}{l}
\varphi: K \times K \longrightarrow \R\ \mbox{is such that}\\
\ \ {\rm (a)}\ 
\varphi (u, \cdot): K \longrightarrow \R \ \mbox{is convex and lower semicontinuous on}\ K,\ \mbox{for all}\ u \in K.\\
\ \ {\rm (b)}\ 
\mbox{there exists}\ \alpha_{\varphi} > 0\ \mbox{such that}\\
\hspace{1cm} \varphi (u_1, v_2) - \varphi (u_1, v_1) + \varphi (u_2, v_1) - \varphi (u_2, v_2) \leqslant \alpha_{\varphi} \Vert u_1 - u_2 \Vert_V \Vert v_1-v_2 \Vert_V\\
\hspace{1cm} \mbox{for all}\ u_1, u_2, v_1, v_2 \in K.
\end{array}
\right\}
\end{align}

$\\$

\begin{align}\label{j0}
\left.
\begin{array}{l}
J\colon X \longrightarrow \R\ \mbox{is such that}\\
\ \ {\rm (a)}\ 
J\ \mbox{is locally Lipschitz}.\hspace{1.4cm}\\
\ \ {\rm (b)}\ 
\Vert \partial J(v) \Vert_{X^*} \leqslant c_0 +  c_1\,\Vert v \Vert_X \ {\rm for\ all}\ v \in X\ \mbox{with}\ c_0, c_1 \geqslant 0.\hspace{1.4cm}\\
\ \ {\rm(c)}\ 
\mbox{there exists}\ \alpha_J > 0\  \mbox{such that}\\
\hspace{1cm} J^0(v_1;v_2-v_1) + J^0(v_2;v_1-v_2) \leqslant \alpha_J \Vert v_1-v_2 \Vert_X^2\ \mbox{for all}\ v_1, v_2 \in X.\hspace{1.4cm}
\end{array}
\right\}
\end{align}
\begin{equation}\label{opm}
M \colon V \longrightarrow X \ \mbox{is a linear, continuous and compact operator.}\qquad\qquad\qquad\qquad\qquad
\end{equation} 
\begin{equation}\label{k0}
K \ \mbox{is a nonempty, closed and convex subset of}\ V. \qquad \qquad \qquad \qquad \qquad \qquad \qquad 
\end{equation}
\begin{equation}\label{f0}
f \in V^*.
\end{equation}
\begin{uwaga}\label{RELAXED}
Hypothesis (\ref{j0})(c) is used in the proof of the uniqueness of solution to hemivariational inequalities.  
This hypothesis is equivalent to the following condition 
\begin{align}\label{REQUIV}
\langle z_1 - z_2, v_1-v_2 \rangle_{X^* \times X} \geqslant - \alpha_J \,\Vert v_1 - v_2 \Vert_X^2
\end{align}
for all $z_i \in \partial J (v_i), z_i,v_i \in X, i=1,2$ with $\alpha_J > 0$. This condition 
is called {\it the relaxed monotonicity 
condition} for a locally Lipschitz function $J$. It can be proved that for a convex function, condition (\ref{j0})(c), or equivalently  \eqref{REQUIV}, holds with $\alpha_J = 0.$
\end{uwaga}
\begin{twr}\label{twabst}
Under hypotheses \eqref{A0}--\eqref{f0} and
\begin{equation}\label{w0}
m_A > \alpha_{\varphi} + \alpha_J\,\Vert M \Vert^2,\ \ \alpha_A > 2\,\alpha_J \,\Vert M \Vert^2
\end{equation}
Problem \ref{probabst} has a unique solution $u \in V.$
\end{twr}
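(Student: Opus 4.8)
The statement is essentially the existence and uniqueness theorem of Mig\'orski, Ochal and Sofonea \cite{MOSVHV}, and the plan is to follow its proof. The idea is to freeze the first argument of $\varphi$, solve an elliptic variational-hemivariational inequality for each frozen state, and then recover the solution of Problem \ref{probabst} as a fixed point of the resulting single-valued solution map. The two conditions in \eqref{w0} are exactly what is needed: $m_A > \alpha_\varphi + \alpha_J \Vert M \Vert^2$ drives the uniqueness and the contraction estimate, while $\alpha_A > 2\alpha_J \Vert M \Vert^2$ drives the coercivity used in the existence part.

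\emph{Uniqueness.} If $u_1, u_2 \in K$ both solve Problem \ref{probabst}, test the inequality for $u_1$ with $v = u_2$ and the one for $u_2$ with $v = u_1$ and add; the terms involving $f$ cancel and one obtains
$$
\langle Au_1 - Au_2, u_1 - u_2 \rangle_{V^* \times V} \leqslant \varphi(u_1,u_2) - \varphi(u_1,u_1) + \varphi(u_2,u_1) - \varphi(u_2,u_2) + J^0(Mu_1; Mu_2 - Mu_1) + J^0(Mu_2; Mu_1 - Mu_2).
$$
By \eqref{A0}(c) the left-hand side is at least $m_A \Vert u_1 - u_2 \Vert_V^2$, while \eqref{fi0}(b) and \eqref{j0}(c) bound the right-hand side from above by $(\alpha_\varphi + \alpha_J \Vert M \Vert^2)\Vert u_1 - u_2 \Vert_V^2$; since $m_A > \alpha_\varphi + \alpha_J \Vert M \Vert^2$, this forces $u_1 = u_2$.

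\emph{Existence via a frozen problem.} Fix $w \in K$ and consider the problem of finding $u_w \in K$ with $\langle Au_w, v-u_w \rangle_{V^* \times V} + \varphi(w,v) - \varphi(w,u_w) + J^0(Mu_w; Mv - Mu_w) \geqslant \langle f, v-u_w \rangle_{V^* \times V}$ for all $v \in K$. It suffices to solve the inclusion $f \in \mathcal{T} u_w$, where $\mathcal{T} = A + M^* \partial J(M\,\cdot\,) + \partial(\varphi(w,\cdot) + I_K) \colon V \to 2^{V^*}$ and $I_K$ is the indicator function of $K$: indeed, if $f - A u_w - M^*\zeta \in \partial(\varphi(w,\cdot)+I_K)(u_w)$ for some $\zeta \in \partial J(M u_w)$, then writing the convex subdifferential inequality for $v \in K$ and using $\langle \zeta, Mv - M u_w \rangle_{X^* \times X} \leqslant J^0(M u_w; Mv - M u_w)$ gives exactly the frozen inequality. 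Now $A$ is pseudomonotone by \eqref{A0}(a); by the growth bound \eqref{j0}(b) together with the compactness of $M$ in \eqref{opm}, the operator $v \mapsto M^* \partial J(Mv)$ is bounded and pseudomonotone; and $\partial(\varphi(w,\cdot)+I_K)$ is maximal monotone because $\varphi(w,\cdot)+I_K$ is proper, convex and lower semicontinuous by \eqref{fi0}(a) and \eqref{k0}. Hence $\mathcal{T}$ is pseudomonotone. Estimating $\langle \eta, v - u_0 \rangle_{V^* \times V}$ for $\eta \in \mathcal{T} v$, with the $u_0 \in K$ from \eqref{A0}(b), by means of the coercivity \eqref{A0}(b), the relaxed monotonicity \eqref{REQUIV} (equivalent to \eqref{j0}(c)) applied to the $\partial J$-term, and an affine minorant of $\varphi(w,\cdot)+I_K$, one checks that $\mathcal{T}$ is coercive provided $\alpha_A > 2\alpha_J \Vert M \Vert^2$. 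The surjectivity theorem for coercive pseudomonotone multivalued operators then yields $u_w$ with $f \in \mathcal{T} u_w$, and the uniqueness computation above, carried out with both first arguments equal to $w$ and using $m_A > \alpha_J \Vert M \Vert^2$, shows that $u_w$ is unique.

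\emph{Fixed point and conclusion.} Define $\Lambda \colon K \to K$ by $\Lambda w = u_w$. For $w_1, w_2 \in K$, repeating the cross-testing computation for $u_{w_1}$ and $u_{w_2}$ gives $(m_A - \alpha_J \Vert M \Vert^2)\Vert u_{w_1} - u_{w_2} \Vert_V^2 \leqslant \alpha_\varphi \Vert w_1 - w_2 \Vert_V \, \Vert u_{w_1} - u_{w_2} \Vert_V$, hence $\Vert \Lambda w_1 - \Lambda w_2 \Vert_V \leqslant \frac{\alpha_\varphi}{m_A - \alpha_J \Vert M \Vert^2}\,\Vert w_1 - w_2 \Vert_V$, with contraction constant strictly less than $1$ by \eqref{w0}. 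Since $K$ is a closed subset of the Banach space $V$, the Banach contraction principle provides a unique fixed point $w^* \in K$, and $u := u_{w^*}$ is then the unique solution of Problem \ref{probabst}. I expect the existence step for the frozen problem to be the main obstacle: verifying that the multivalued sum $\mathcal{T}$ is pseudomonotone — where the compactness of $M$ is essential — and coercive in precisely the form that accounts for the condition $\alpha_A > 2\alpha_J \Vert M \Vert^2$, and then invoking the surjectivity theorem; these are the parts worked out in detail in \cite{MOSVHV}.
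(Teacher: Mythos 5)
Your proposal is correct and follows exactly the strategy of Theorem 16 in \cite{MOSVHV}, which is all the paper's own proof consists of (a one-line reference to that result). The freeze-and-contract scheme, the surjectivity argument for the coercive pseudomonotone multivalued sum in the frozen problem, and the roles assigned to the two inequalities in \eqref{w0} all match the cited argument.
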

\begin{proof}
The proof of Theorem \ref{twabst} is similar to the proof of Theorem 16 in \cite{MOSVHV}. 
\end{proof}
\section{History-dependent variational-hemivariational inequality}\label{s31}
In this section, we study an abstract variational-hemivariational inequality which contains a history-dependent operator. We start with the time-dependent version of Problem \ref{probabst}. To this end, we consider the operators $A\colon (0,T)\times V \longrightarrow V^*,\ M\colon V \longrightarrow X,$ the functional $J\colon (0,T)\times X \longrightarrow \R,$ the functions $\varphi\colon K \times K \longrightarrow \R$ and $f\colon (0,T) \longrightarrow V^*.$ 
With these data we deal with the following variational-hemivariational inequality in which the time variable plays the role of parameter.
\begin{problem}\label{probabst1}
Find $u \in \v$ such that $u(t) \in K$ and
\begin{align}\label{vhv}
\begin{split}
 \langle A(t, u(t)), v -u(t)\rangle_{V^* \times V} &+ \varphi (u(t),v) - \varphi (u(t), u(t))\\ &+ J^0 (t, Mu(t);M(v-u(t))) \geqslant \langle f(t), v-u(t) \rangle_{V^* \times V}
\end{split}
\end{align}
for all $v \in K$ and a.e. $t \in (0,T).$
\end{problem}
In the study of Problem \ref{probabst1}, we assume that the assumptions  \eqref{fi0}, \eqref{opm} and \eqref{k0} hold. Moreover, we need the following assumptions on the data.
\begin{align}\label{wA}
\left.
\begin{array}{l}
A\colon (0,T) \times V \longrightarrow V^*\ \mbox{is such that}\\
\ \ {\rm (a)}\ 
A(\cdot,v)\ \mbox{is measurable on}\ (0,T)\ \mbox{for all}\ v \in V.\\
\ \ {\rm (b)}\ 
A(t,\cdot)\ \mbox{is strongly monotone, i.e., there exists}\ m_A > 0\ \mbox{such that}\\
\hspace{1cm} \langle A (t, v_1) - A (t, v_2), v_1 - v_2 \rangle_{V^* \times V} \geqslant m_A \Vert v_1 - v_2 \Vert^2_V \\
\hspace{1cm} \mbox{for all}\ v_1, v_2 \in V\ \mbox{and a.e.}\ t \in (0, T). \\
\ \ {\rm (c)}\ 
A(t,\cdot)\ \mbox{is continuous on}\ V\ \mbox{for a.e.}\ t \in (0,T).\\
\ \ {\rm (d)}\ 
\Vert A(t, v) \Vert_{V^*} \leqslant a_0 (t) + a_1 \Vert v \Vert_V\ \mbox{for all}\ v \in V,
\ \mbox{a.e.}\ t \in (0,T)\\ 
\hspace{1cm} \mbox{with}\ a_0 \in L^2 (0, T), a_0 \geqslant 0\ \mbox{and}\ a_1 >0.\\
\ \ {\rm (e)}\ 
A(t,\cdot)\ \mbox{is coercive, i.e., there exists}\ \alpha_A > 0,\ \beta \in \R,\  \beta_1(t) \in L^2(0,T)\\  \hspace{1cm} \mbox{and}\ u_0 \in K\ \mbox{such that}\ 
\langle A(t,v),v - u_0 \rangle_{V^* \times V} \geqslant \alpha_A\,\Vert v \Vert^2_V - \beta \Vert v \Vert_V - \beta_1(t)\\ \hspace{1cm} {\rm for\ all}\ v \in V \ {\rm a.e.} \ t \in (0,T).
\end{array}
\right\}
\end{align}
\begin{align}\label{wJ}
\left.
\begin{array}{l}
J\colon  (0,T) \times X \rightarrow \mathbb{R}\ \mbox{is such that}\hspace{1.5cm}\\
\ \ {\rm (a)}\ 
J(\cdot,v)\ \mbox{is measurable on}\ (0,T)\ \mbox{for all}\ v \in X.\hspace{1.5cm}\\
\ \ {\rm (b)}\ 
J (t, \cdot)\ \mbox{is locally Lipschitz on}\ X\ \mbox{for a.e.}\ t \in (0,T).\hspace{1.5cm}\\
\ \ {\rm (c)}\ 
\Vert \partial J (t,v) \Vert_{X^*} \leqslant c_0(t) + c_1 \Vert v \Vert_X\ \mbox{for all}\ v \in X,\ \mbox{a.e.}\ t \in (0,T)\ \mbox{with}\\ \hspace{1cm} c_0 \in L^2(0,T),\; c_0,c_1 \geqslant 0.\hspace{1.5cm}\\
\ \ {\rm (d)}\  
J(t, \cdot)\ \mbox{or}\ - J (t, \cdot)\ \mbox{is regular (in the sense of Clarke) on}\ X\ \mbox{ for}\hspace{1.5cm}\\ 
\hspace{1cm} \mbox{a.e.}\ t \in (0,T).\hspace{1.5cm}\\
\ \ {\rm (e)}\ 
\mbox{there exists}\ m_J > 0\ \mbox{such that} \hspace{1.5cm}\\
\hspace{1cm} J^0 (t, v_1; v_2-v_1) + J^0(t,v_2;v_1-v_2)  \leqslant m_J \Vert v_1-v_2 \Vert^2_{X}\hspace{1.5cm}\\
\hspace{1cm} \mbox{for all}\ v_1, v_2 \in X\ \mbox{and a.e.}\ t \in (0,T).\hspace{1.5cm}
\end{array}
\right\}
\end{align}
Moreover, we assume that
\begin{align}\label{WF}
\left.
\begin{array}{l}
\ \ {\rm (a)}\ 
f \in \v^*. \hspace{0.4cm}\\
\ \ {\rm (b)}\ 
m_A >\alpha_{\varphi}+ m_J \Vert M \Vert^2,\ \ \alpha_A > 2\,m_J\,\Vert M \Vert^2,\ \ \mbox{where}\ \ \Vert M \Vert = \Vert M \Vert_{\mathcal{L} (V,X)}.\hspace{0.4cm}
\end{array}
\right\}
\end{align}
\par We have the following existence and uniqueness result.
\begin{twr}\label{twierdzenie1}
Under the assumptions \eqref{fi0}, \eqref{opm}, \eqref{k0} and \eqref{wA}--\eqref{WF}, 
Problem \ref{probabst1} has a unique solution $u \in \v$.
\end{twr}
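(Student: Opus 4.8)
The plan is to solve Problem~\ref{probabst1} separately for a.e.\ fixed $t\in(0,T)$ by applying the static result Theorem~\ref{twabst}, and then to check that the family $\{u(t)\}$ obtained in this way is measurable in $t$ and square integrable, so that it defines an element of $\v$. The role of Theorem~\ref{twierdzenie1} beyond Theorem~\ref{twabst} is precisely to establish these two last facts; no fixed point argument (Lemma~\ref{lematkulig}) is needed here, since Problem~\ref{probabst1} contains no history-dependent operator.

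\emph{Step 1: pointwise solvability.} Fix $t\in(0,T)$ outside the null set on which the almost-everywhere conditions in \eqref{wA}, \eqref{wJ}, \eqref{WF}(a) fail, and regard $A(t,\cdot)$, $\varphi$, $J(t,\cdot)$, $M$, $K$, $f(t)$ as data for Problem~\ref{probabst}. Assumptions \eqref{opm}, \eqref{k0}, \eqref{fi0} are common to both settings. For the operator, \eqref{wA}(e) is \eqref{A0}(b), \eqref{wA}(b) is \eqref{A0}(c), and strong monotonicity together with the continuity \eqref{wA}(c) and the boundedness coming from the linear growth \eqref{wA}(d) yields pseudomonotonicity \eqref{A0}(a). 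Conditions \eqref{wJ}(b)--(e) imply that $J(t,\cdot)$ satisfies the requirements on $J$ in Theorem~\ref{twabst} with $\alpha_J=m_J$. Since $f\in\v^*=L^2(0,T;V^*)$ we have $f(t)\in V^*$ for a.e.\ $t$, i.e.\ \eqref{f0}, and \eqref{WF}(b) is exactly the smallness condition \eqref{w0} with $\alpha_J=m_J$. Hence Theorem~\ref{twabst} applies and produces, for a.e.\ $t\in(0,T)$, a \emph{unique} $u(t)\in K$ satisfying \eqref{vhv}; this defines a function $u\colon(0,T)\to K$.

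\emph{Step 2: measurability.} The main point is that $t\mapsto u(t)$ is measurable. By \eqref{wA}(a),(c), $A$ is of Carath\'eodory type, hence jointly measurable in $(t,v)$; by \eqref{wJ}(a),(b), the map $(t,v,w)\mapsto J^0(t,v;w)$ is measurable in $t$ and upper semicontinuous in $(v,w)$, hence superpositionally measurable; $\varphi$ is independent of $t$ and $f$ is measurable. Using the separability of $V$ and the continuity in the unknown of the left-hand side of \eqref{vhv}, it follows that the set of all pairs $(t,w)\in(0,T)\times V$ with $w\in K$ and $w$ solving \eqref{vhv} is $\mathcal L(0,T)\otimes\mathcal B(V)$-measurable, being a countable intersection, over a dense sequence in $K$, of measurable sets. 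By the Aumann--von Neumann measurable selection theorem this set admits a measurable selection, which by the uniqueness obtained in Step~1 coincides a.e.\ with $u$; thus $u$ is measurable.

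\emph{Step 3: integrability and uniqueness.} To see $u\in L^2(0,T;V)$, insert $v=u_0\in K$ (the element from \eqref{wA}(e)) in \eqref{vhv}. Estimating the operator term from below by the coercivity \eqref{wA}(e), the term $\varphi(u(t),u_0)-\varphi(u(t),u(t))$ from above by means of \eqref{fi0}(b) and an affine minorant of the convex lower semicontinuous function $\varphi(u_0,\cdot)$, the hemivariational term by $|J^0(t,Mu(t);M(u_0-u(t)))|\le(c_0(t)+c_1\|M\|\,\|u(t)\|_V)\|M\|(\|u(t)\|_V+\|u_0\|_V)$ via \eqref{wJ}(c), and the right-hand side by $\|f(t)\|_{V^*}(\|u(t)\|_V+\|u_0\|_V)$, one collects terms and absorbs the quadratic contributions using \eqref{WF}(b); this gives a pointwise bound $\|u(t)\|_V\le\rho(t)$ for a.e.\ $t$, where $\rho$ is built from $\beta_1(\cdot)$, $c_0(\cdot)$, $\|f(\cdot)\|_{V^*}$ and constants, so $\rho\in L^2(0,T)$ and $u\in\v$. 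Uniqueness in $\v$ is immediate: if $u_1,u_2\in\v$ both solve Problem~\ref{probabst1}, then for a.e.\ $t$ both $u_1(t)$ and $u_2(t)$ solve \eqref{vhv}, hence $u_1(t)=u_2(t)$ by the pointwise uniqueness of Step~1. The delicate part is Step~2 --- making rigorous the joint measurability of $A$ and of $J^0$, the reduction to countably many test elements, and the application of the selection theorem --- whereas Steps~1 and~3 amount mostly to bookkeeping of constants.
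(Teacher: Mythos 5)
Your Steps 1 and 3 follow the paper's proof essentially verbatim: pointwise application of Theorem~\ref{twabst} after checking that \eqref{wA}--\eqref{WF} imply \eqref{A0}--\eqref{f0} with $\alpha_J=m_J$, then an a priori bound obtained by testing with a fixed element of $K$, using an affine minorant of the convex lower semicontinuous function $\varphi(v_0,\cdot)$ and the growth condition \eqref{wJ}(c) on $\partial J$, and finally uniqueness from pointwise uniqueness. (The paper tests with an arbitrary $v_0\in K$ and uses strong monotonicity \eqref{wA}(b) for the lower bound rather than coercivity, but that is a cosmetic difference.)

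Your Step 2 is where you genuinely diverge, and it is the weak point. The paper does not use a measurable selection theorem. Instead it proves a quantitative stability estimate: if $u_1(t),u_2(t)$ solve \eqref{vhv} with right-hand sides $g_1,g_2\in V^*$, then testing each inequality with the other solution and invoking \eqref{wA}(b), \eqref{fi0}(b), \eqref{wJ}(e) and the smallness condition \eqref{WF}(b) gives $\Vert u_1(t)-u_2(t)\Vert_V\leqslant c\,\Vert g_1-g_2\Vert_{V^*}$; the solution map $g\mapsto u(t)$ is therefore Lipschitz, and composing it with the measurable function $f$ yields measurability of $t\mapsto u(t)$. This route uses only the quantitative hypotheses already in force. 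Your selection-theorem route, by contrast, needs two ingredients that the stated assumptions do not supply: (i) the reduction of ``for all $v\in K$'' to a countable dense family requires $v\mapsto\varphi(w,v)$ to be \emph{upper} semicontinuous on $K$ (the inequality passes to the limit along $v_n\to v$ only if the left-hand side is usc in $v$), whereas \eqref{fi0}(a) gives only convexity and \emph{lower} semicontinuity, and $K$ need not have interior points, so real-valued convex lsc does not imply continuous here; and (ii) measurability of the sets $\{(t,w):\dots\}$ requires some continuity or measurability of $w\mapsto\varphi(w,v)-\varphi(w,w)$ in the first argument, which \eqref{fi0}(b) controls only through differences of differences. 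So as sketched your Step 2 does not close under the paper's hypotheses; the stability-plus-composition argument is both more elementary and the one the proof actually needs (it is also reused later in the uniqueness part of Theorem~\ref{twierdzenie11}). I recommend replacing your Step 2 by that Lipschitz-dependence argument.
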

\begin{proof}
We use Theorem \ref{twabst} for $t \in (0,T)$ fixed. Note that, from the hypothesis \eqref{wA}, it follows that the operator $A(t, \cdot)$ satisfies \eqref{A0} for a.e.\;$t \in (0,T).$ From \eqref{wA}(b),(c),(d) we observe, that
$A$ is monotone and  hemicontinuous and bounded. Hence and from Theorem~3.69 in \cite{MOSBOOK}, we know that the operator $A(t, \cdot)$ is pseudomonotone, so the condition \eqref{A0}(a) holds for a.e. $t \in (0,T)$.
Moreover, for a.e. $t \in (0,T),$ the condition \eqref{wA}(e) implies \eqref{A0}(b). 
We also see, that from the hypothesis \eqref{wJ} and \eqref{opm}, it follows that the function $ J(t, \cdot)$ satisfies \eqref{j0} for a.e. $t \in (0,T).$ Note that, the assumption \eqref{WF}(b) implies the assumption \eqref{w0} with $\alpha_J = m_J$. Hence, exploiting Theorem \ref{twabst}, we deduce that, for a.e. $t \in (0,T),$ Problem~\ref{probabst1} has a unique solution $u(t) \in K.$

Now, we prove that the function $t \longmapsto u(t)$  is measurable on $(0,T).$ Let $g \in V^*$ be given and $u(t)\in V$ be the unique solution of the inequality \eqref{vhv}.
We claim that the solution $u$ depends continuously on the right-hand side $g$, for a.e. $t \in (0,T).$ Namely, let $g_1, g_2 \in V^*$ and $u_1(t), u_2(t) \in K$ be the corresponding solutions to 
\eqref{vhv}. Then 
\begin{align}\label{ab}
\begin{split}
\langle A(t, &u_1(t)), v - u_1(t) \rangle_{V^* \times V} + \varphi(u_1(t),v) - \varphi(u_1(t),u_1(t))\\ &+ J^0 (t, M u_1(t);M(v-u_1(t))) \geqslant  \langle g_1, v - u_1(t) \rangle_{V^* \times V}
\end{split}
\end{align}
and
\begin{align}\label{cd}
\begin{split}
\langle A(t, &u_2(t)), v - u_2(t) \rangle_{V^* \times V} + \varphi(u_2(t),v) - \varphi(u_2(t),u_2(t))\\ &+ J^0 (t, M u_2(t);M(v-u_2(t))) \geqslant  \langle g_2, v - u_2(t) \rangle_{V^* \times V}
\end{split}
\end{align}
for all $v \in K$ and a.e. $t\in (0,T).$

We put $v = u_2(t)$ into \eqref{ab}  and $v = u_1(t)$ into \eqref{cd}. Adding the obtained inequalities, we get
\begin{align*}
\begin{split}
\langle A(t, &u_1(t)) - A(t, u_2(t)), u_1(t)-u_2(t) \rangle_{V^* \times V} - \big( \varphi(u_1(t),u_2(t)) - \varphi(u_1(t),u_1(t))\\ &+ \varphi(u_2(t),u_1(t)) - \varphi(u_2(t),u_2(t))\big) -\big( J^0(t,Mu_1(t);M(u_2(t)-u_1(t)))\\ &+ J^0(t,Mu_2(t);M(u_1(t)-u_2(t)))\big) \leqslant \langle g_1-g_2,u_1(t)-u_2(t) \rangle_{V^* \times V}.
\end{split}
\end{align*}
From this, conditions \eqref{fi0}(b), \eqref{wA}(b) and \eqref{wJ}(e), we have
\begin{align}
\begin{split}
m_A \Vert u_1(t) -u_2(t) \Vert_V^2 - \alpha_{\varphi}\Vert u_1(t) -u_2(t) \Vert_V^2 &-m_J\Vert M \Vert^2\Vert u_1(t) -u_2(t) \Vert_V^2\\ \leqslant &\Vert g_1 - g_2\Vert_{V^*}\Vert u_1(t) - u_2(t) \Vert_V.
\end{split}
\end{align}
Exploiting \eqref{WF}(b), we deduce that
\begin{equation}\label{oszacu}
\Vert u_1(t) - u_2(t) \Vert_V \leqslant c\,\Vert g_1 - g_2 \Vert_{V^*} \quad {\rm for\ a.e. }\ t \in (0,T).
\end{equation}
Hence, we conclude that the mapping $\psi \colon V^* \ni g \longmapsto u(t) \in V$ is continuous for a.e. $t \in (0,T),$ which proves the claim. By the condition \eqref{WF}(a) we know that the function $f\colon [0,T] \longrightarrow V^*$ is measurable. From Lemma 2.27(iii) in \cite{MOSBOOK}, we have that $\psi \circ f \colon [0,T] \longrightarrow V$ is measurable. So, the solution $u(t)$ of Problem \ref{probabst1} is measurable on $(0,T).$ 

Next, we prove that the solution of Problem \ref{probabst1} satisfies $u \in \v.$ Let $v_0 \in K.$ Thus, from  the inequality \eqref{vhv}, we get
\begin{align}\label{WZOR}
\begin{split}
 \langle A(t, &u(t)) - A(t,v_0), v_0 -u(t)\rangle_{V^* \times V} \leqslant \langle A(t,v_0), v_0 - u(t) \rangle_{V^* \times V} + \varphi (u(t),v_0)\\ &- \varphi (u(t), u(t)) + J^0 (t, Mu(t);M(v_0 -u(t))) + \langle f(t), v_0-u(t) \rangle_{V^* \times V}.
\end{split}
\end{align}
Now, we show the estimations which are needed in the next part of proof. Choosing $u_1 = u(t), u_2 = v_0, v_1 = u(t), v_2= v_0$ in \eqref{fi0}(b), we obtain
\begin{eqnarray*}
\varphi (u(t), v_0) - \varphi(u(t),u(t)) + \varphi(v_0, u(t)) - \varphi(v_0, v_0) \leqslant \alpha_{\varphi}\,\Vert u(t) - v_0 \Vert^2_V 
\end{eqnarray*}
and
\begin{eqnarray}\label{oszfi*}
\varphi (u(t), v_0) - \varphi(u(t),u(t)) \leqslant - \varphi(v_0, u(t)) + \varphi(v_0, v_0) + \alpha_{\varphi}\,\Vert u(t) - v_0 \Vert^2_V.
\end{eqnarray}
Since $\varphi(u, \cdot)$ is convex and lower semicontinuous for $u \in K$, it admits an affine minorant (cf. Proposition 5.2.25 in \cite{DMP1}), i.e.,  there are $l_{v_0} \in V^*$ and  $b_{v_0} \in \R$ such that $\varphi(v_0, v) \geqslant \langle l_{v_0},v \rangle_{V^* \times V} + b_{v_0}$ for all $v \in V$. Using this inequality, we deduce that $- \varphi (v_0, u) \leqslant  \Vert l_{v_0} \Vert_{V^*}\;\Vert u \Vert_V - b_{v_0}$ for all $u \in V,$ so
\begin{align}\label{oszfi}
\begin{split}
\varphi(v_0, v_0) &- \varphi (v_0, u(t)) \leqslant  \Vert l_{v_0} \Vert_{V^*}\;\Vert u(t) \Vert_V - b_{v_0} + \varphi(v_0, v_0) \leqslant\\ &\Vert l_{v_0} \Vert_{V^*}\;\Vert v_0 - u(t) \Vert_V + \Vert l_{v_0} \Vert_{V^*}\;\Vert v_0 \Vert_V + \vert b_{v_0} \vert + \vert \varphi(v_0, v_0) \vert.
\end{split}
\end{align}
Combining \eqref{oszfi*} and \eqref{oszfi}, we conclude that
\begin{align}\label{oszfiK}
\begin{split}
\varphi (u(t), v_0) &- \varphi(u(t),u(t)) \leqslant \Vert l_{v_0} \Vert_{V^*}\;\Vert v_0 - u(t) \Vert_V + \Vert l_{v_0} \Vert_{V^*}\;\Vert v_0 \Vert_V + \vert b_{v_0} \vert\\ &+ \vert \varphi(v_0, v_0)\vert + \alpha_{\varphi}\,\Vert u(t) - v_0 \Vert^2_V.
\end{split}
\end{align}
On the other hand, from Proposition 3.23(iii) in \cite{MOSBOOK}, the Cauchy-Schwartz inequality and the condition \eqref{wJ}(c), we obtain
\begin{align}\label{JOTY}
\begin{split}
J^0 (t, Mu(t);M(v_0 -u(t))) &= \mbox{max} \lbrace \langle \zeta(t), M(v_0 - u(t))\rangle_{X^* \times X} \ \vert \ \zeta(t) \in \partial J(t, Mu(t)) \rbrace\\
&\leqslant \Vert \partial J(t, Mu(t)) \Vert_{X^*}\;\Vert M(v_0 - u(t)) \Vert_X\\ &\leqslant (c_0(t) + c_1 \Vert M \Vert\,\Vert u(t) \Vert_V)\Vert M \Vert\,\Vert v_0 - u(t) \Vert_V.
\end{split}
\end{align}
Using conditions \eqref{wA}(b),(d) and estimates \eqref{oszfiK}, \eqref{JOTY} into the inequality \eqref{WZOR}, we see that
\begin{align*}
\begin{split}
(m_A - \alpha_{\varphi})\,\Vert &v_0 - u(t) \Vert^2_V \leqslant (a_0(t) + a_1\,\Vert v_0 \Vert_V)\,\Vert v_0 - u(t) \Vert_V + \Vert l_{v_0} \Vert_{V^*}\;\Vert v_0 - u(t) \Vert_V\\ &+ \Vert l_{v_0} \Vert_{V^*}\;\Vert v_0 \Vert_V + \vert b_{v_0} \vert + \vert \varphi(v_0, v_0)\vert + (c_0(t) + c_1 \Vert M \Vert\,\Vert u(t) \Vert_V)\Vert M \Vert\,\Vert v_0 - u(t) \Vert_V\\
&+ \Vert f(t)\Vert_{V^*}\,\Vert v_0 - u(t)\Vert_V.
\end{split}
\end{align*}
Hence, from the condition \eqref{WF}(b) and the elementary property, namely,  $x^2 \leqslant ax + b$ imply $x^2 \leqslant a^2 + b$ for $x,a,b \geqslant 0,$ we have
\begin{align*}
\begin{split}
\Vert v_0 - u(t) \Vert^2_V &\leqslant c^2\,(a_0(t) + a_1\,\Vert v_0 \Vert_V + \Vert l_{v_0} \Vert_{V^*}  + \Vert M \Vert\,c_0(t) + c_1 \Vert M \Vert^2\,\Vert u(t) \Vert_V + \Vert f(t)\Vert_{V^*})^2\\ &+ \Vert l_{v_0} \Vert_{V^*}\;\Vert v_0 \Vert_V + \vert b_{v_0} \vert + \vert \varphi(v_0, v_0)\vert.
\end{split}
\end{align*}
From this and inequality $\Vert u(t) \Vert^2 \leqslant 2\,\Vert u(t) - v_0\Vert_V^2 + 2\,\Vert v_0 \Vert_V^2,$ we conclude that
\begin{align*}
\begin{split}
\Vert u(t) \Vert^2_V &\leqslant 2\,[c^2\,(a_0(t) + a_1\,\Vert v_0 \Vert_V + \Vert l_{v_0} \Vert_{V^*}  + \Vert M \Vert\,c_0(t) + c_1 \Vert M \Vert^2\,\Vert u(t) \Vert_V + \Vert f(t)\Vert_{V^*})^2\\ &+ \Vert l_{v_0} \Vert_{V^*}\;\Vert v_0 \Vert_V + \vert b_{v_0} \vert + \vert \varphi(v_0, v_0)\vert] + 2\,\Vert v_0 \Vert_V^2.
\end{split}
\end{align*}
Thus, the inequality $\big(\sum\limits_{i=1}^{m}\,a_i \big)^2 \leqslant m\, \sum\limits_{i=1}^{m}\,a_i^2$ for $a_i \geqslant 0$ implies that
$$
\Vert u(t) \Vert_V^2 \leqslant c_1^2\,(a_0^2(t) + c_0^2(t) + \Vert f(t)\Vert_{V^*}^2 + c_2^2) + c_3,
$$
where $c_1, c_2, c_3 \geqslant 0$ are constants.
Integrating the last inequality over the interval $(0,T)$, we deduce that $\Vert u \Vert_{\v} \leqslant c$. Hence and the fact that $f \in \v^*$, we deduce that $u \in \v.$ 
The proof is finished.
\end{proof}
In the next problem, in contrast to Problem \ref{probabst1}, the convex function $\tilde \varphi$ depends on the three arguments which follows directly from the application (cf. Section \ref{s41}).
\begin{problem}\label{problem1a}
Find $u\in \v$ such that $u(t) \in K$ and 
\begin{align}\label{var-hemi}
\begin{split}
 \langle A(t, u(t)), v -u(t)\rangle_{V^* \times V} &+ \tilde \varphi ((\s u)(t), u(t),v) - \tilde \varphi ((\s u)(t), u(t), u(t))\\ &+ J^0 (t, Mu(t);M(v-u(t))) \geqslant \langle f(t), v-u(t) \rangle_{V^* \times V}
\end{split}
\end{align}
for all $v \in K$ and a.e. $t \in (0,T).$
\end{problem}
The inequality \eqref{var-hemi} represents a {\textit{variational-hemivariational inequality with  history-dependent operator}}. 

As before, we assume that the operators $\s, A$ and the functions $ J, f$ satisfy conditions \eqref{wS}, \eqref{wA}, \eqref{wJ} and \eqref{WF}(a), respectively. Additionally, we assume that the function
\begin{align}\label{fi01}
\left.
\begin{array}{l}
\tilde \varphi \colon V^* \times K \times K\longrightarrow \R\ \mbox{is such that}\\
\ \ {\rm (a)}\ 
\tilde \varphi (w,u, \cdot)\colon K \longrightarrow \R\ \mbox{is convex and lower semicontinuous on}\ K,\ \mbox{for all}\\ \hspace{1cm} w \in V^*,\  u \in K.\\
\ \ {\rm (b)}\ 
\mbox{there exists}\ \alpha_{\tilde \varphi} > 0\ \mbox{such that}\\
\hspace{1cm} \tilde \varphi (w_1, u_1, v_2) - \tilde \varphi (w_1,u_1, v_1) + \tilde \varphi (w_2,u_2, v_1) - \tilde \varphi (w_2,u_2, v_2)\\ 
\hspace{1cm} \leqslant \alpha_{\tilde \varphi} (\Vert u_1 - u_2 \Vert_V + \Vert w_1 - w_2 \Vert_V)\Vert v_1-v_2 \Vert_V\
\mbox{for all}\ w_1, w_2 \in V^*,\\
\hspace{1cm} u_1, u_2, v_1, v_2 \in K.
\end{array}
\right\}
\end{align}
\begin{twr}\label{twierdzenie11}
Under the assumptions \eqref{wS}, \eqref{wA}--\eqref{WF} and \eqref{fi01}, Problem \ref{problem1a} has a unique solution $u \in \v$.
\end{twr}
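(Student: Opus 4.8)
The plan is to reduce Problem~\ref{problem1a} to Problem~\ref{probabst1} by a fixed-point argument on the history-dependent operator $\s$, exactly in the spirit of Lemma~\ref{lematkulig}. For a fixed $\eta \in \v^*$, consider the auxiliary problem of finding $u_\eta \in \v$ with $u_\eta(t) \in K$ for a.e.\ $t$ and
\begin{align*}
\begin{split}
\langle A(t, u_\eta(t)), v - u_\eta(t)\rangle_{V^* \times V} &+ \tilde\varphi(\eta(t), u_\eta(t), v) - \tilde\varphi(\eta(t), u_\eta(t), u_\eta(t))\\
&+ J^0(t, M u_\eta(t); M(v - u_\eta(t))) \geqslant \langle f(t), v - u_\eta(t)\rangle_{V^* \times V}
\end{split}
\end{align*}
for all $v \in K$ and a.e.\ $t \in (0,T)$. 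First I would observe that, with $\eta$ frozen, the function $\varphi_\eta(t, u, v) := \tilde\varphi(\eta(t), u, v)$ satisfies \eqref{fi0}: part (a) is immediate from \eqref{fi01}(a), and part (b) follows from \eqref{fi01}(b) by taking $w_1 = \eta(t) = w_2$, which kills the $\Vert w_1 - w_2\Vert_V$ term and leaves the constant $\alpha_\varphi := \alpha_{\tilde\varphi}$. Hence Theorem~\ref{twierdzenie1} applies (note the smallness condition \eqref{WF}(b) holds with this $\alpha_\varphi$, by hypothesis), and yields a unique $u_\eta \in \v$ solving the auxiliary problem. This defines an operator $\Lambda \colon \v^* \to \v$, $\Lambda\eta = u_\eta$.

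The next step is to establish a Lipschitz-type estimate for $\Lambda$ of the form required by Lemma~\ref{lematkulig}. Given $\eta_1, \eta_2 \in \v^*$ with solutions $u_1 = \Lambda\eta_1$, $u_2 = \Lambda\eta_2$, I would test the inequality for $u_1$ with $v = u_2(t)$ and the inequality for $u_2$ with $v = u_1(t)$, add them, and regroup. The $A$-terms give a lower bound $m_A \Vert u_1(t) - u_2(t)\Vert_V^2$ by \eqref{wA}(b); the $J^0$-terms contribute $-m_J\Vert M\Vert^2 \Vert u_1(t) - u_2(t)\Vert_V^2$ via \eqref{wJ}(e) and \eqref{opm}; and the $\tilde\varphi$-terms, now with \emph{different} first arguments $\eta_1(t) \neq \eta_2(t)$, are controlled by \eqref{fi01}(b):
$$
\tilde\varphi(\eta_1(t), u_1(t), u_2(t)) - \tilde\varphi(\eta_1(t), u_1(t), u_1(t)) + \tilde\varphi(\eta_2(t), u_2(t), u_1(t)) - \tilde\varphi(\eta_2(t), u_2(t), u_2(t))
$$
is bounded by $\alpha_{\tilde\varphi}(\Vert u_1(t) - u_2(t)\Vert_V + \Vert \eta_1(t) - \eta_2(t)\Vert_{V^*})\Vert u_1(t) - u_2(t)\Vert_V$. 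Collecting terms and using \eqref{WF}(b) to absorb $(m_A - \alpha_{\tilde\varphi} - m_J\Vert M\Vert^2) > 0$, I obtain
$$
\Vert u_1(t) - u_2(t)\Vert_V \leqslant c\,\Vert \eta_1(t) - \eta_2(t)\Vert_{V^*} \quad\text{for a.e. } t \in (0,T).
$$

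Finally, I would compose $\Lambda$ with $\s$. Define $\Theta := \s \circ \Lambda \colon \v^* \to \v^*$, i.e.\ $(\Theta\eta)(t) = (\s(\Lambda\eta))(t)$. Combining the contraction-type property \eqref{wS} of $\s$ with the pointwise estimate just derived gives, for a.e.\ $t$,
\begin{align*}
\Vert (\Theta\eta_1)(t) - (\Theta\eta_2)(t)\Vert_{V^*}^2
&\leqslant L_\s^2 \Bigl(\int_0^t \Vert (\Lambda\eta_1)(s) - (\Lambda\eta_2)(s)\Vert_V \, ds\Bigr)^2\\
&\leqslant L_\s^2\, T \int_0^t \Vert (\Lambda\eta_1)(s) - (\Lambda\eta_2)(s)\Vert_V^2 \, ds
\leqslant c \int_0^t \Vert \eta_1(s) - \eta_2(s)\Vert_{V^*}^2 \, ds,
\end{align*}
using Jensen/Cauchy--Schwarz for the middle step. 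This is precisely the hypothesis of Lemma~\ref{lematkulig} applied to $X = V^*$, so $\Theta$ has a unique fixed point $\eta^* \in \v^*$. Setting $u := \Lambda\eta^*$, one has $\s u = \s(\Lambda\eta^*) = \Theta\eta^* = \eta^*$, hence $(\s u)(t) = \eta^*(t)$, and therefore $u$ solves \eqref{var-hemi}. Uniqueness follows by reversing this: any solution $u$ of Problem~\ref{problem1a} makes $\s u$ a fixed point of $\Theta$, forcing $\s u = \eta^*$ and then $u = \Lambda\eta^*$ by the uniqueness in Theorem~\ref{twierdzenie1}. The main obstacle, and the only genuinely delicate point, is verifying that the smallness gap in \eqref{WF}(b) survives when $\alpha_\varphi$ is replaced by $\alpha_{\tilde\varphi}$ and is large enough to absorb the extra $\Vert\eta_1(t) - \eta_2(t)\Vert_{V^*}$ contribution coming from \eqref{fi01}(b); everything else is bookkeeping of the type already carried out in the proof of Theorem~\ref{twierdzenie1}.
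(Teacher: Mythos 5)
Your proposal is correct and follows essentially the same route as the paper: freeze $\eta \in \v^*$, check that $\tilde\varphi(\eta(t),\cdot,\cdot)$ satisfies \eqref{fi0} with $\alpha_\varphi = \alpha_{\tilde\varphi}$, invoke Theorem~\ref{twierdzenie1} for the auxiliary problem, derive the pointwise estimate $\Vert u_1(t)-u_2(t)\Vert_V \leqslant c\,\Vert \eta_1(t)-\eta_2(t)\Vert_{V^*}$, and apply Lemma~\ref{lematkulig} to the composition of $\s$ with the solution map (which is exactly the paper's operator $\Lambda\eta = \s u_\eta$). The only, harmless, divergence is the uniqueness step: you deduce it from the uniqueness of the fixed point together with the uniqueness in Theorem~\ref{twierdzenie1}, whereas the paper runs a Gronwall argument on two solutions of Problem~\ref{problem1a}; both are valid.
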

\begin{proof} 
Let $\eta \in \v^*$ be fixed and we consider the following auxiliary problem.
\begin{problem}\label{problem2a}
Find $u_{\eta}\in \v$ such that $u_{\eta}(t) \in K$ and
\begin{align*}
\begin{split}
\langle A(t, u_\eta(t)), v -u_\eta(t)\rangle_{V^* \times V} &+ \tilde \varphi (\eta(t),u_\eta(t),v) - \tilde \varphi (\eta(t),u_\eta(t), u_\eta(t)) \\ &+ J^0 (t, Mu_\eta(t);M(v-u_\eta(t))) \geqslant \langle  f(t), v-u_{\eta}(t) \rangle_{V^* \times V}
\end{split}
\end{align*}
for all $v \in K$ and a.e. $t \in (0,T).$
\end{problem}
Let $\phi_\eta \colon K \times K \longrightarrow \R$ be defined by $\phi_\eta(w,v)= \tilde \varphi (\eta(t), w, v)$ for all $w,v \in K$ and for a.e. $t \in (0,T)$. We show that function $\phi_\eta$ satisfies \eqref{fi0}. It is easy to see that function $\phi_\eta (u,\cdot)$ satisfies \eqref{fi01}(a)  for a.e.\;$t \in (0,T)$ and for all $u \in K$. Moreover, using \eqref{fi01}(b), we infer that
\begin{align}
\begin{split}
&\phi_\eta(u_1,v_2) - \phi_\eta(u_1,v_1) + \phi_\eta(u_2,v_1) - \phi_\eta(u_2,v_2) = \tilde \varphi (\eta(t),u_1,v_2)\\ &- \tilde \varphi (\eta(t),u_1,v_1) + \tilde \varphi (\eta(t),u_2,v_1) - \tilde \varphi (\eta(t),u_2,v_2) \leqslant \alpha_{\tilde \varphi}\,\Vert u_1 - u_2 \Vert_V \Vert v_1 - v_2 \Vert_V
\end{split}
\end{align}
${\rm for\ all}\ u_1, u_2, v_1, v_2 \in K,$ and a.e. $t \in (0,T).$ So, the condition \eqref{fi0}(b) holds with $\alpha_{\varphi} = \alpha_{\tilde \varphi}.$

Hence and from Theorem \ref{twierdzenie1}, we deduce that Problem \ref{problem2a} has the unique solution $u_{\eta} \in \v$.
Next, we define the operator 
$\Lambda: \v^* \longrightarrow \v^*$ by
$
\Lambda \eta = \s u_{\eta} \ {\rm for\ all} \ \eta \in \v^*,
$
where $u_{\eta} \in \v$ is the solution to Problem \ref{problem2a}. 
\begin{klaim}\label{lematlambda}
The operator $\Lambda$ has a unique fixed point $\eta^* \in \v^*$.
\end{klaim}
Let $\eta_1, \eta_2 \in \v^*,\ t \in (0,T)$ and let $u_i= u_{\eta_i} \in \v$ for $i=1,2,$ be the corresponding solutions to Problem \ref{problem2a}. We put into the inequality in Problem \ref{problem2a}, $v=u_2(t) - u_1(t)$ and $v= u_1(t) - u_2(t),$ respectively. Thus,
\begin{align*}
\begin{split}
\langle A(t, u_1(t)), u_2(t) -u_1(t)\rangle_{V^* \times V} &+ \tilde \varphi (\eta_1(t),u_1(t),u_2(t)) - \tilde \varphi (\eta_1(t),u_1(t), u_1(t)) \\ &+ J^0 (t, Mu_1(t);M(u_2(t)-u_1(t))) \geqslant \langle  f(t), u_2(t)-u_1(t) \rangle_{V^* \times V}
\end{split}
\end{align*}
and
\begin{align*}
\begin{split}
\langle A(t, u_2(t)), u_1(t) -u_2(t)\rangle_{V^* \times V} &+ \tilde \varphi (\eta_2(t),u_2(t),u_1(t)) - \tilde \varphi (\eta_2(t),u_2(t), u_2(t)) \\ &+ J^0 (t, Mu_2(t);M(u_1(t)-u_2(t))) \geqslant \langle  f(t), u_1(t)-u_2(t) \rangle_{V^* \times V}.
\end{split}
\end{align*}
Adding obtained inequalities, we have
\begin{align*}
\begin{split}
&\langle A(t, u_1(t)) - A(t, u_2(t)), u_1(t) - u_2(t) \rangle_{V^* \times V} - \Big( J^0 (t, Mu_1(t);M(u_2(t)-u_1(t)))\\ &+ J^0 (t, Mu_2(t);M(u_1(t)-u_2(t)))\Big) \leqslant \tilde \varphi(\eta_1(t), u_1(t),u_2(t)) - \tilde \varphi(\eta_1(t),u_1(t),u_1(t))\\ &+ \tilde \varphi(\eta_2(t),u_2(t),u_1(t)) - \tilde \varphi(\eta_2(t),u_2(t),u_2(t)).
\end{split}
\end{align*}
Using \eqref{wA}(b), \eqref{wJ}(e) and \eqref{fi01}(b), we get
\begin{align*}
\begin{split}
m_A\,\Vert u_1(t) - u_2(t) \Vert^2_V &- \Big( \alpha_{\tilde \varphi}\, \Vert u_1(t) - u_2(t) \Vert^2_V + m_J\Vert M \Vert^2\,\Vert u_1(t) - u_2(t) \Vert^2_V \Big)\\ &\leqslant \alpha_{\tilde \varphi}\Vert \eta_1(t) - \eta_2(t) \Vert_{V^*} \Vert u_1(t) - u_2(t) \Vert_{V}.
\end{split}
\end{align*}
Hence, by the condition \eqref{WF}(b) with $\alpha_{\varphi} = \alpha_{\tilde \varphi}$, we obtain
\begin{equation*}
\Vert u_1(t) - u_2(t) \Vert_V \leqslant c\, \Vert \eta_1(t) - \eta_2(t) \Vert_{V^*}
\end{equation*}
\noindent
which together with the inequality (cf. \eqref{wS})
$$
\Vert (\Lambda \eta_1)(t) - (\Lambda \eta_2)(t) \Vert_{V^*} = \Vert (\s u_1)(t) - (\s u_2)(t) \Vert_{V^*} \leqslant L_{\s}\, \int\limits_{0}^{t}\, \Vert u_1(s) - u_2(s) \Vert_{V}\, ds
$$
imply that
$
\Vert (\Lambda \eta_1)(t) - (\Lambda \eta_2)(t) \Vert_{V^*}\leqslant cL_{\s} \, \int\limits_{0}^{t}\, \Vert \eta_1(s) - \eta_2(s) \Vert_{V^*}\, ds.
$
From the last inequality and the H\"older inequality, we conclude that
$$
\Vert (\Lambda \eta_1)(t) - (\Lambda \eta_2)(t) \Vert_{V^*}^2\leqslant c\, \int\limits_{0}^{t}\, \Vert \eta_1(s) - \eta_2(s) \Vert_{V^*}^2\, ds \ \ {\rm for\ a.e.}\;t \in (0,T).
$$
Applying Lemma \ref{lematkulig}, we deduce that there exists  a unique $\eta^* \in \v^*$ such that $\Lambda \eta^* = \eta^*,$ which concludes the proof of the claim.

Now, we continue the proof of Theorem \ref{twierdzenie11}.
\\
{\bf Existence.} Let $\eta^* \in  \v^*$ be the fixed point of the operator $\Lambda$ (cf. Claim \ref{lematlambda}). We put $\eta = \eta^*$ in Problem \ref{problem2a}  and since $\eta^* = \Lambda \eta^* = \s u_{\eta^*}$, we see that $u_{\eta^*} \in \v$ is a solution to Problem \ref{problem1a}.\\ 
{\bf Uniqueness.} Here, we use the Gronwall-type argument. Let $u_1, u_2 \in \v$ be solutions to Problem \ref{problem1a} and $t \in (0,T).$ Then, proceeding similarly as in the proof of Theorem \ref{twierdzenie1}, we see that
\begin{align*}
\begin{split}
\langle A(t, &u_1(t)) - A(t, u_2(t)), u_1(t) -u_2(t)\rangle_{V^* \times V} -\Big( J^0 (t, Mu_1(t);M(u_2(t)-u_1(t)))\\ &+ J^0 (t, Mu_2(t);M(u_1(t)-u_2(t))) \Big) \leqslant \tilde \varphi ((\s u_1)(t),u_1(t),u_2(t)) - \tilde \varphi ((\s u_1)(t),u_1(t), u_1(t))\\ &+ \tilde \varphi ((\s u_2)(t),u_2(t),u_1(t)) - \tilde \varphi ((\s u_2)(t),u_2(t), u_2(t)) .
\end{split}
\end{align*}
Using conditions \eqref{wA}(b), \eqref{wJ}(e) and \eqref{fi01}(b), we get
\begin{align*}
\begin{split}
m_A\,\Vert u_1(t) - u_2(t) \Vert^2_{V} &- (\alpha_{\tilde \varphi} + m_J\Vert M \Vert^2) \Vert u_1(t) - u_2(t) \Vert^2_V \\ &\leqslant \alpha_{\tilde \varphi}\,\Vert (\s u_1)(t) - (\s u_2)(t) \Vert_{V^*} \Vert u_1(t) - u_2(t) \Vert_V.
\end{split}
\end{align*}
Next, from \eqref{wS} and \eqref{WF}(b), we have
$
\Vert u_1(t) - u_2(t) \Vert_V \leqslant c\, \int\limits_{0}^{t}\, \Vert u_1(s) - u_2(s) \Vert_V\,ds$ for a.e. $t \in (0,T).
$
Using the Gronwall inequality, we obtain
$
\Vert u_1(t) - u_2(t) \Vert_V = 0$ for a.e. $ t \in (0,T)$,
which implies that $u_1(t) = u_2(t)$ for a.e. $t \in (0,T).$ The proof
of the theorem is complete.
\end{proof}
\section{Quasistatic elastic-viscoplastic contact problem with normal damped response, unilateral constraint and memory term}\label{s41}
In this section we use the results obtained in Section \ref{s31} into the study the elastic-viscoplastic contact problem with normal damped response, unilateral constraint and memory term.

The physical setting is as follows. An elastic-viscoplastic body occupies a bounded domain $\Omega \subset \R^d$, where $d=2,3$ in applications. The boundary $\Gamma$ of the domain $\Omega$ is Lipschitz
continuous and it is partitioned into three disjoint measurable parts $\Gamma_1, \Gamma_2$ and $\Gamma_3$ with $\mbox{meas}(\Gamma_1) > 0.$ The body is subject to the action of body forces of density $f_0$ and surface tractions of density $f_2$ which act on $\Gamma_2.$ We assume that the body is clamped on $\Gamma_1$ and it is in contact on $\Gamma_3$ with a rigid foundation. Furthermore the mechanical process is quasistatic and we study it in the time interval $[0,T]$ with $T>0.$

We use the notation $\R^d$ and $\es^d$ for the $d-$dimensional real linear space and the space of second order symmetric tensors on $\R^d$, respectively, which are equipped with the following canonical inner products and  norms 
$$
u \cdot v =  u_{i} v_{i}, \quad \Vert v \Vert_{\mathbb{R}^{d}} = (v \cdot v)^{\frac{1}{2}} \quad \mathrm{for \, all} \quad  u= (u_{i}), \, v = (v_{i}) \in \mathbb{R}^{d},
$$
$$
\sigma : \tau = \sigma_{ij} \tau_{ij}, \quad \Vert \tau \Vert_{\mathbb{S}^{d}} = (\tau : \tau)^{\frac{1}{2}} \quad \mathrm{for \, all} \quad  \sigma= (\sigma_{ij}), \, \tau = (\tau_{ij}) \in \mathbb{S}^{d},
$$
where the indices $i$ and $j$ run between $1$ and $d$.
Let us add, that the summation convention over
repeated indices is used.  Let $u' = \frac{\partial u}{\partial t}$ represent the  velocity field and let  ${\rm Div} \sigma = (\sigma_{ij,j})$ be the divergence operator. We use the standard notation for the Lebesgue and Sobolev spaces and we introduce the following Hilbert spaces
$$H = L^2(\Omega;\R^d) = \lbrace v = (v_i)\ \vert\ v_i \in L^2(\Omega),\ 1 \leqslant i \leqslant d \rbrace, 
$$
$$
\h =L^2(\Omega; \es^d) = \lbrace \tau = (\tau_{ij})\ \vert\ \tau_{ij} = \tau_{ji} \in L^2(\Omega),\ 1 \leqslant i,\ j \leqslant d \rbrace,\quad
\h_1 = \lbrace \tau \in \h \ \vert \ {\rm Div}\tau \in H \rbrace.
$$
It is worth mentioning, that the Hilbert space, presented above, are equipped with the canonical inner products
$$
( u, v )_{H} = \int\limits_{\Omega}^{} u \cdot v \ dx,
\quad
( \sigma, \tau )_{\mathcal{H}} = \int\limits_{\Omega}^{} \sigma : \tau \ dx,
\quad
( \sigma,\tau )_{\h_1} = ( \sigma, \tau )_{\h} + \big( {\rm Div} \sigma, {\rm Div} \tau \big)_{H}$$
and the associated norms 
$$
\Vert v \Vert_{H} = \Big(\int\limits_{\Omega}^{}\, (\Vert v(x) \Vert_{\R^d})^2\,dx \Big)^{\frac{1}{2}},\ \   
\Vert \tau \Vert_{\h} = \Big(\int\limits_{\Omega}^{}\,\Vert \tau(x) \Vert_{\es^d}^2\, dx \Big)^{\frac{1}{2}},\ \ \Vert \tau \Vert_{\h_1} = \Vert \tau \Vert_{\h} + \Vert {\rm Div}\,\tau \Vert_H, $$
respectively. 

We consider also the real Hilbert space for the displacement
$$
V = \lbrace v \in H^1(\Omega;\R^d)\ \vert \ v=0 \ {\rm a.e.\ on}\ \Gamma_1\ \mbox{and}\ v_\nu = 0\ \mbox{a.e. on}\ \Gamma_3 \rbrace.
$$
This space is endowed with the inner product and the associated norm given by
\begin{align*}
( u,v )_V = ( \varepsilon (u), \varepsilon(v) )_{\h}
\quad
{\rm and} \quad
\Vert v \Vert_V = \Vert \varepsilon(v) \Vert_{\h},
\end{align*}
where
$
\varepsilon (u) = (\varepsilon_{ij} (u))$ such that $\varepsilon_{ij} (u) = \frac{1}{2} \Big( \frac{\partial u_{i}}{\partial x_{j}} + \frac{\partial u_{j}}{\partial x_{i}}\Big)
$
is the deformation operator.
Additionally, the inequality  $\Vert v \Vert_{L^2(\Gamma_3;\R^d)} \leqslant c_0\, \Vert v \Vert_V$ holds for all $v \in V$, where $c_0$ is a constant which depends on $\Omega,\ \Gamma_1$ and $\Gamma_3$.  

Assume that $\nu$ denote the outward unit normal vector
on $\Gamma, v \in H^{1}(\Omega;\R^d)$ and $\sigma$ is a regular function. Thus, 
the normal and tangential components of the displacement field (stress field) on the boundary $\Gamma,$ are defined by
$
v_{\nu} = v \cdot \nu,\ v_{\tau} = v - v_{\nu} \cdot \nu\ 
\big(\sigma_{\nu} = (\sigma \nu) \cdot \nu,\ \sigma_{\tau} = \sigma \nu - \sigma_{\nu} \cdot \nu\big)$.
In order to derive variational formulations of the contact problems we will use the Green formula and the decomposition formula which are presented below.
\begin{equation}\label{GF}
\big( \sigma, \varepsilon (v) \big)_{\h} + \big( {\rm Div} \sigma, v \big)_{H} = \int\limits_{\Gamma}^{} \sigma \nu \cdot v\, d \Gamma\quad {\rm for\ all}\ v \in H^1(\Omega;\R^d).
\end{equation}
\begin{equation}\label{decom}
\sigma \nu \cdot v = \sigma_{\nu} v_{\nu} + \sigma_{\tau} \cdot v_{\tau}. 
\end{equation}
For simplicity,  we will write $v$ instead of $\gamma v,$ where $\gamma$ denotes the trace of $v$ on the boundary $\Gamma$. For simplicity, we use the following notation $Q= \Omega \times (0,T)$ and $\Sigma_i = \Gamma_i \times (0,T)$ for $i=1,2,3.$

We study the elastic-viscoplastic contact problem which classical formulation is the following.
\begin{problem}\label{model1}
Find a displacement field $u \colon Q \longrightarrow \R^d$ and a stress field $\sigma\colon Q \longrightarrow \es^d$ such that 
\begin{eqnarray}
 \sigma (t) = \a(t, \varepsilon(u'(t))) + \b (t,\varepsilon(u(t))) + \int\limits_{0}^{t}\, \g(s, \sigma(s) - \a(s, \varepsilon(u'(s))), \varepsilon(u(s)))\,ds \ \ \mbox{in}\ \ Q,\qquad
\label{constitutivelaw}\\
{\rm Div}\, \sigma(t) + f_0(t) = 0  \ \ \mbox{in}\ \ Q,\qquad
\label{equationofmotion}\\
u(t) = 0 \ \ \mbox{on}\ \ \Sigma_1,\qquad
\label{boundary1}
\end{eqnarray}
\begin{eqnarray}
\sigma(t) \nu = f_2(t)\ \ \mbox{on}\ \ \Sigma_2,\hspace{-10.5cm}
\label{boundary2}\\
\hspace{-1.7cm}
- \sigma_{\tau}(t) \in \partial j_{\tau}(t, u'_{\tau}(t)) \ \ \mbox{on}\ \ \Sigma_3,\hspace{-10.5cm}
\label{boundary3}
\end{eqnarray}
\begin{equation}\label{signorini}
\hspace{3.15cm}
\left.
\begin{array}{l}
\hspace{-0.2cm} 
u'_{\nu}(t) \leqslant g,\quad \sigma_{\nu}(t) + p(u'_{\nu}(t)) + \int\limits_{0}^{t}\,b(t-s) (u^{'}_{\nu})^{+}(s)\,ds \leqslant 0 \\ [2mm]
\hspace{-0.4cm} 
(u'_{\nu}(t) - g)\Big(\sigma_{\nu}(t) + p(u'_{\nu}(t)) + \int\limits_{0}^{t}\,b(t-s) (u^{'}_{\nu})^{+}(s)\,ds \Big)= 0
\end{array}
\right\}\ \ \mbox{on}\ \ \Sigma_3,
\end{equation}
\begin{eqnarray}
\hspace{11.9cm}
u(0) = u_0\ \ \mbox{in}\ \ \Omega.\qquad
\label{po}
\end{eqnarray} 
Let us note that equation \eqref{constitutivelaw} is the elastic-viscoplastic constitutive law in which $\a$ is the viscosity operator, $\b$ is the elasticity operator and $\g$ is the viscoplasticity operator. The equilibrium equation is presented by \eqref{equationofmotion}. The displacement and the traction boundary conditions are expressed by \eqref{boundary1} and \eqref{boundary2}, respectively. The conditions \eqref{boundary3} and \eqref{signorini} are the friction law and the contact condition with normal compliance, unilateral constraint and memory term. The law \eqref{signorini} without memory term, is considered in \cite{BDS}. Here, $p$ and $b$ represent given contact functions. Finally, \eqref{po} is the initial condition and $u_0$ denotes the initial displacement. 
\end{problem}
In the study of Problem \ref{model1}, we need the following assumptions.
\begin{align}\label{wkA}
\left.
\begin{array}{l}
\a: Q \times \es^d \longrightarrow \es^d\ \mbox{is an operator such that}\\
\ \ {\rm (a)}\  
\a (\cdot,\cdot, \varepsilon)\ \mbox{is measurable on}\ Q\ \mbox{for all}\ \varepsilon \in \es^d.\\
\ \ {\rm (b)}\ 
\a(x,t,\cdot)\ \mbox{is strongly monotone, i.e., there exists}\ m_{\a} > 0\ \mbox{such that}\\
\hspace{1cm} \big( \a (x,t, \varepsilon_1) - \a (x,t, \varepsilon_2)\big):( \varepsilon_1 - \varepsilon_2) \geqslant m_{\a} \Vert \varepsilon_1 - \varepsilon_2 \Vert^2_{\es^d} \\
\hspace{1cm} \mbox{for all}\ \varepsilon_1, \varepsilon_2 \in \es^d\ \mbox{and a.e.}\ (x,t) \in Q.\\
\ \ {\rm (c)}\ 
\a(x,t,\cdot)\ \mbox{is continuous on}\ \es^d,\ \mbox{for a.e.}\ (x,t) \in Q.\\
\ \ {\rm (d)}\ 
\Vert \a(x,t, \varepsilon) \Vert_{\es^d} \leqslant \overline{a}_0 (x,t) + \overline{a}_1 \Vert \varepsilon \Vert_{\es^d}\ \mbox{for all}\ \varepsilon \in \es^d\ \mbox{and a.e.}\ (x,t) \in Q\\
 \hspace{1cm} \mbox{with}\ \overline{a}_0 \in L^2 (Q), \overline{a}_0 \geqslant 0\ \mbox{and}\ \overline{a}_1 >0.\\
 \ \ {\rm (e)}\ 
\mbox{there exists}\ \alpha_{\a} > 0\ \mbox{such that}\ 
 \a(x,t,\varepsilon): \varepsilon \geqslant \alpha_{\a}\Vert \varepsilon \Vert^2_{\es^d} \\
\hspace{1cm} {\rm for\ all} \ \varepsilon \in \es^d \ {\rm and\ a.e.} \ (x,t) \in Q.
\end{array}
\right\}
\end{align}
\begin{align}\label{wB}
\left.
\begin{array}{l}
\b: Q \times \es^d \longrightarrow \es^d\ \mbox{is an operator such that}\\
\ \ {\rm (a)}\ 
\b (\cdot, \cdot,\varepsilon)\ {\rm is\ measurable\ on}\ Q\ {\rm for\ all}\ \varepsilon \in \es^{d}$ and $\b (\cdot,\cdot,0) \in L^2 (Q ; \es^d).\\
\ \ {\rm (b)}\ 
\Vert \b (x, t, \varepsilon_1) - \b (x,t, \varepsilon_2)\Vert_{\es^{d}} \leqslant L_{\b}\ \Vert \varepsilon_1 - \varepsilon_2 \Vert_{\es^d}\ {\rm for\ all}\ \varepsilon_1, \varepsilon_2 \in \es^{d},\\ 
\hspace{1cm} {\rm a.e.}\ (x, t) \in Q\  {\rm with}\ L_{\b} > 0.
\end{array}
\right\}
\end{align}
\begin{align}\label{wC}
\left.
\begin{array}{l}
\g: Q \times \es^d \times \es^d \longrightarrow \es^d\ \mbox{is an operator such that}\\
\ \ {\rm (a)}\ 
\g (\cdot, \cdot,\sigma, \varepsilon)\ {\rm is\ measurable\ on}\ Q\ {\rm for\ all}\ \sigma, \varepsilon \in \es^{d}\ \mbox{and}\\ \hspace{1cm} \g (\cdot,\cdot,0,0)\ \in L^2 (Q ; \es^d).\\
\ \ {\rm (b)}\ 
\Vert \g (x, t, \sigma_1, \varepsilon_1) - \g (x,t, \sigma_2, \varepsilon_2)\Vert_{\es^{d}} \leqslant L_{\g}\ (\Vert \sigma_1 - \sigma_2 \Vert_{\es^d} + \Vert \varepsilon_1 - \varepsilon_2 \Vert_{\es^d})\ \mbox{for}\\
\hspace{1cm} {\rm all}\ \sigma_1, \sigma_2, \varepsilon_1, \varepsilon_2 \in \es^{d},\ {\rm a.e.}\ (x, t) \in Q\  {\rm with}\ L_{\g} > 0.
\end{array}
\right\}
\end{align}
\begin{align}\label{wj}
\left.
\begin{array}{l}
j_{\tau}: \Sigma_{3} \times \R^d \longrightarrow \R\ \mbox{is such that}\hspace{1cm}\\
\ \ {\rm (a)}\ 
j_{\tau} (\cdot, \cdot,\xi)\ {\rm is\ measurable\ on}\ \Sigma_3\ {\rm for\ all}\ \xi \in \R^d,\ \mbox{and there exists}\\ \hspace{1cm} {\rm e} \in L^2 (\Gamma_3; \R^d)\ \mbox{such that}\  j_{\tau}(\cdot, \cdot, {\rm e}(\cdot)) \in L^1(\Sigma_3).\hspace{1cm}\\
\ \ {\rm (b)}\ 
j_{\tau} (x,t,\cdot)\ \mbox{is locally Lipschitz on}\ \R^d\ \mbox{for a.e.}\ (x,t) \in \Sigma_3.\hspace{1cm}\\
\ \ {\rm (c)}\ 
\Vert \partial j_{\tau} (x,t,\xi) \Vert_{\R^d} \leqslant \overline{c}_0(t) + \overline{c}_1 \Vert \xi \Vert_{\R^d}\ \mbox{for all}\ \xi \in \R^d,\ {\rm a.e.}\ (x,t) \in \Sigma_3,\hspace{1cm}\\
\hspace{1cm} c_0 \in L^2(0,T)\ \mbox{with}\ \overline{c}_0,  \overline{c}_1 \geqslant 0.\hspace{1cm}\\
\ \ {\rm (d)}\ 
\mbox{there exists}\ \alpha_j > 0\ \mbox{such that} \hspace{1cm}\\
\hspace{1cm} j_\tau^0(x,t,\xi_2;\xi_1-\xi_2) + j_\tau^0 (x,t, \xi_1; \xi_2-\xi_1) \leqslant \alpha_j \Vert \xi_1-\xi_2 \Vert^2_{\R^d} \mbox{for all}\hspace{1cm}\\ 
\hspace{1cm} \xi_1, \xi_2 \in \R^d,\ \mbox{and a.e.}\ (x,t) \in \Sigma_3.\hspace{1cm}\\
\ \ {\rm (e)}\ 
j_{\tau}(x,t,\cdot)\ \mbox{or}\ - j_{\tau}(x,t,\cdot)\ \mbox{is regular on}\ \R^d,\ \mbox{for a.e.}\ (x,t) \in \Sigma_3.\hspace{1cm}
\end{array}
\right\}
\end{align}
\begin{align}\label{wpa}
\left.
\begin{array}{l}
p\colon \Gamma_3 \times \R \longrightarrow \R_+\ \mbox{is such that}\hspace{1.2cm}\\
\ \ {\rm (a)}\ 
p(\cdot,r)\ \mbox{is measurable on}\ \Gamma_3\ \mbox{for all}\ r \in \R\ \mbox{and}\ p(\cdot,0) \in L^2(\Gamma_3).\hspace{1.2cm}\\
\ \ {\rm (b)}\ 
\mbox{there exists}\ L_p > 0\ \mbox{such that}\ 
\vert p(x,r_1) - p(x,r_2) \vert \leqslant L_p \vert r_1 - r_2 \vert
\hspace{1.2cm}\\
\hspace{1cm} \mbox{for all}\ r_1,r_2 \in \R,\ \mbox{a.e.}\ x \in \Gamma_3.\hspace{1.2cm}
\end{array}
\right\}
\end{align}
\begin{equation}\label{gap}
g \in L^{\infty}(\Gamma_3), \quad g > 0.
\end{equation}
\begin{equation}\label{wopb}
b \in L^1 (0,T;L^{\infty}(\Gamma_3)).
\end{equation}
\begin{equation}\label{wf1f2}
f_0 \in L^2(0,T;L^2(\Omega;\R^d)), \quad f_2 \in L^2(0,T;L^2(\Gamma_2;\R^d)).
\end{equation}
\begin{equation}\label{uzero}
u_0 \in V
\end{equation}
\par The concrete example of the function $j_\tau$ which satisfies condition \eqref{wj} is as follows $j_\tau (\xi)= \Vert \xi \Vert_{\R^d}$ for all $\xi \in \R^d$. Here, for simplicity, we omit the dependence on variables $(x,t)$. The subdifferential of function $j_\tau$ has the form
\begin{displaymath}
\partial j_\tau(\xi) = \left\{ \begin{array}{ll}
\overline{B}(0, 1) & \textrm{if $\xi = 0$}\\
\frac{\xi}{\Vert \xi \Vert_{\R^d}} & \textrm{if $\xi \neq 0$}
\end{array} \right.
\end{displaymath}
for all $\xi \in \R^d,$ where $\overline{B}(0, 1)$ denotes the closed unit ball in $\R^d.$ 
Note that the function $j_\tau$ is convex and regular. We see that \eqref{wj} holds with $\overline{c}_0 = 1, \overline{c}_1 = 0$ and $\alpha_j = 0$ (cf. Section 7.4 in \cite{MOSBOOK}).

Now, we provide the variational formulation of Problem \ref{model1}. To this end, we introduce the set of admissible displacement fields defined by
\begin{equation}\label{zbioru}
K = \{ v \in V\ \vert \  v_{\nu} \leqslant g\ {\rm a.e.\ on}\ \Gamma_3 \}.
\end{equation}
Assume that $(u,\sigma)$ are sufficiently smooth functions which solve \eqref{constitutivelaw}--\eqref{po}. Let $t \in (0,T)$ be fixed and $v \in K.$ We use the Green formula \eqref{GF} and the equation \eqref{equationofmotion} to obtain
$$
\int\limits_{\Omega}^{}\,\sigma(t) : (\varepsilon(v) - \varepsilon(u'(t)))\,dx = \int\limits_{\Omega}^{}\,f_0(t)\cdot (v-u'(t))\,d x + \int\limits_{\Gamma}^{}\,\sigma(t)\nu \cdot (v - u'(t))\,d\Gamma.
$$
Using \eqref{boundary1}, \eqref{boundary2} and the decomposition formula \eqref{decom}, we get
\begin{align}\label{wzorg}
\begin{split}
&\int\limits_{\Omega}^{}\,\sigma(t) : (\varepsilon(v) - \varepsilon(u'(t)))\,d x = \int\limits_{\Omega}^{}\,f_0(t)\cdot (v-u'(t))\,dx \\ &+ \int\limits_{\Gamma_2}^{}\,f_2(t)\cdot (v-u'(t))\,d \Gamma + \int\limits_{\Gamma_3}^{}\,\Big(\sigma_{\nu}(t)(v_{\nu} - u'_{\nu}(t)) + \sigma_{\tau}(t)\cdot(v_{\tau} - u'_{\tau}(t))\Big)\, d \Gamma.
\end{split}
\end{align}
From \eqref{signorini}, we see that
\begin{align}\label{sigmani}
\begin{split}
\sigma_{\nu}(t)(v_{\nu} - u'_{\nu}(t)) &= \Big( \sigma_{\nu}(t) + p(u'_{\nu}(t)) + \int\limits_{0}^{t}\,b(t-s) (u^{'}_{\nu})^{+}(s)\,ds \Big)(v_{\nu} - g)\\
&+ \Big( \sigma_{\nu}(t) + p( u'_{\nu}(t)) + \int\limits_{0}^{t}\,b(t-s) (u^{'}_{\nu})^{+}(s)\,ds \Big) (g - u'_{\nu}(t))\\
&- \Big( p(u'_{\nu}(t)) + \int\limits_{0}^{t}\,b(t-s) (u^{'}_{\nu})^{+}(s)\,ds \Big)(v_{\nu} - u'_{\nu}(t)) \quad {\rm on}\ \Gamma_3.
\end{split}
\end{align}
From the contact condition \eqref{signorini} and the definition  of set $K$ (cf. \eqref{zbioru}), we have
\begin{align*}
\begin{split}
\sigma_{\nu}(t)(v_{\nu} - u'_{\nu}(t)) \geqslant - \Big( p(u'_{\nu}(t)) + \int\limits_{0}^{t}\,b(t-s) (u^{'}_{\nu})^{+}(s)\,ds \Big) (v_{\nu} - u'_{\nu}(t)) \quad {\rm on}\ \Gamma_3,
\end{split}
\end{align*}
and
\begin{align}
\begin{split}
\int\limits_{\Gamma_3}^{}\,\sigma_{\nu}(t)(v_{\nu} &- u'_{\nu}(t))\, d \Gamma \geqslant\\ &- \int\limits_{\Gamma_3}^{}\,\Big( p(u'_{\nu}(t)) + \int\limits_{0}^{t}\,b(t-s) (u^{'}_{\nu})^{+}(s)\,ds \Big)(v_{\nu} - u'_{\nu}(t))\,d\Gamma.
\end{split}
\end{align}
 The definition of the Clarke subdifferential and the boundary condition \eqref{boundary3} imply that
\begin{align}\label{sigmatau}
\int\limits_{\Gamma_3}^{}\,\sigma_{\tau}(t)\cdot(v_{\tau} - u'_{\tau}(t))\, d \Gamma \geqslant \int\limits_{\Gamma_3}^{}\,j^0_{\tau}(t,u'_{\tau}(t);v_{\tau} - u'_{\tau}(t))\,d \Gamma.
\end{align}
Using the definition of the space $V$, we note that
$$
v \longmapsto \int\limits_{\Omega}^{}\,f_0(t)\cdot v\, d x + \int\limits_{\Gamma_2}^{}\,f_2(t) \cdot v \, d \Gamma \quad {\rm for\ a.e.}\ t \in (0,T)
$$
is a linear, continuous functional on $V.$ Therefore, we may apply
the Riesz representation theorem to define the function $f \colon (0,T) \longrightarrow V^*$ by
\begin{align}\label{deff}
\langle f(t), v \rangle_{V^* \times V} = ( f_0(t),v )_H + ( f_2(t), v  )_{L^2(\Gamma_2; \R^d)}
\end{align}
for all $v \in V$ and a.e. $t \in (0,T).$
Combining \eqref{constitutivelaw} and \eqref{wzorg}--\eqref{deff}, we obtain the following variational formulation of Problem \ref{model1}.
\begin{problem}\label{model2}
Find $u \in \w$ such that $u(t) \in K,\, \sigma \in L^2(0,T;\h)$  and
\begin{align*}
\begin{split}
&\sigma(t) = \a(t,\varepsilon(u'(t)) + \b(t,\varepsilon(u(t))+\int\limits_{0}^{t}\, \g(s, \sigma(s) - \a(s, \varepsilon(u'(s))), \varepsilon(u(s)))\,ds,\ \mbox{a.e.}\ t \in (0,T)\\
&(\sigma(t), \varepsilon(v) - \varepsilon(u'(t)))_{\h} + \int\limits_{\Gamma_3}{}\,p(u'_{\nu}(t))(v_{\nu} - u'_{\nu}(t))\,d \Gamma + \int\limits_{\Gamma_3}\,\Big( \int\limits_{0}^{t}\,b(t-s) (u^{'}_{\nu})^{+}(s)\,ds \Big)(v_{\nu} - u'_{\nu}(t))\,d \Gamma\\
&+ \int\limits_{\Gamma_3}^{}\,j^0_{\tau}(t,u'_{\tau}(t);v_{\tau} - u'_{\tau}(t))\,d \Gamma \geqslant \langle f(t), v - u'(t)\rangle_{V^* \times V}
\end{split}
\end{align*}
for all $v \in K$ and a.e. $t \in (0,T)$ with $u(0) = u_0.$
\end{problem}
The existence and uniqueness result for Problem \ref{model2} is the following.
\begin{twr}\label{twierdzeniemodel1}
Under the assumptions \eqref{wkA}--\eqref{uzero} and
\begin{equation}\label{nier4}
m_{\a} > \mbox{\rm max}\,\lbrace 1, L_P \rbrace + \alpha_j \Vert \gamma \Vert^2,\ \ \alpha_{\a} > 2\,\alpha_j\,\Vert \gamma \Vert^2
\end{equation}
Problem \ref{model2} has a unique solution.
\end{twr}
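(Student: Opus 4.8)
The plan is to take the velocity $w=u'$ as the principal unknown and to recast Problem~\ref{model2} in the form of the abstract history-dependent variational--hemivariational inequality of Problem~\ref{problem1a}, so that Theorem~\ref{twierdzenie11} applies directly. Thus one works in $\v=L^2(0,T;V)$ with the constraint set $K$ of \eqref{zbioru} carried by $w$ (the unilateral condition in \eqref{signorini} bounds $u'_\nu$, not $u_\nu$), recovers the displacement by $u(t)=u_0+\int_0^t w(s)\,ds$, takes $X=L^2(\Gamma_3;\R^d)$ and lets $M\in\mathcal L(V,X)$ be the trace $v\mapsto v|_{\Gamma_3}$ — linear, continuous and compact by the compactness of the Sobolev trace embedding, so that \eqref{opm} holds and $\|M\|=\|\gamma\|$ (note that $v|_{\Gamma_3}=v_\tau|_{\Gamma_3}$ by the definition of $V$, so the trace is automatically tangential on $\Gamma_3$) — and takes for $f\in\v^*$ the functional \eqref{deff}, which lies in $\v^*$ by \eqref{wf1f2}.

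First I would eliminate the stress. Setting $z(t):=\sigma(t)-\a(t,\varepsilon(w(t)))$ turns the constitutive law \eqref{constitutivelaw} into the Volterra equation $z(t)=\b(t,\varepsilon(u(t)))+\int_0^t\g(s,z(s),\varepsilon(u(s)))\,ds$, which for a given $u$ has a unique solution $z\in L^2(0,T;\h)$ by Lemma~\ref{lematkulig} on $X=\h$ (the hypothesis of that lemma follows from \eqref{wC}(b) and the Hölder inequality, with $c=L_\g^2T$). A Gronwall estimate built from \eqref{wB}(b) and \eqref{wC}(b), together with $\|u_1(t)-u_2(t)\|_V\le\int_0^t\|w_1(s)-w_2(s)\|_V\,ds$, shows that $w\mapsto z$ is history-dependent; consequently $(\s_1 w)(t):=\big(v\mapsto(z(t),\varepsilon(v))_{\h}\big)\in V^*$ satisfies \eqref{wS}. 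The memory kernel defines $(\s_2 w)(t):=\big(v\mapsto\int_{\Gamma_3}(\int_0^t b(t-s)(w_\nu)^+(s)\,ds)\,v_\nu\,d\Gamma\big)\in V^*$, which satisfies \eqref{wS} because $|r^+-s^+|\le|r-s|$ and $b\in L^1(0,T;L^\infty(\Gamma_3))$ by \eqref{wopb}; by Lemma~\ref{PwS}, $\s:=\s_1+\s_2$ satisfies \eqref{wS}.

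Next I would identify the remaining abstract data and check their hypotheses. Define $A$ by $\langle A(t,v_1),v_2\rangle_{V^*\times V}:=(\a(t,\varepsilon(v_1)),\varepsilon(v_2))_{\h}$; then \eqref{wkA}(a)--(e) give \eqref{wA}(a)--(e) with $m_A=m_{\a}$ and $\alpha_A=\alpha_{\a}$ (taking $u_0=0\in K$, legitimate since $g>0$ by \eqref{gap}). Define $J(t,\xi):=\int_{\Gamma_3}j_\tau(x,t,\xi(x))\,d\Gamma$ on $X$; by the standard properties of Clarke subdifferentials of integral functionals, \eqref{wj}(a)--(e) yield \eqref{wJ}(a)--(e) with $m_J=\alpha_j$, and the regularity \eqref{wJ}(d) makes $J^0(t,Mw(t);M(v-w(t)))$ equal to $\int_{\Gamma_3}j_\tau^0(t,w_\tau(t);v_\tau-w_\tau(t))\,d\Gamma$. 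Define $\tilde\varphi(\eta,u,v):=\langle\eta,v\rangle_{V^*\times V}+\int_{\Gamma_3}p(u_\nu)\,v_\nu\,d\Gamma$; it is affine in $v$, so \eqref{fi01}(a) is immediate, and a short computation using the Lipschitz bound \eqref{wpa}(b) and the trace inequality gives \eqref{fi01}(b) with a constant $\alpha_{\tilde\varphi}$ governed by $\max\{1,L_p\}$ (the $1$ coming from the linear $\eta$-term). With these choices, the Green-formula computation \eqref{wzorg}--\eqref{deff} combined with the splitting $\sigma(t)=\a(t,\varepsilon(w(t)))+z(t)$ and the pointwise inequalities \eqref{sigmani}--\eqref{sigmatau} shows that Problem~\ref{model2} is precisely Problem~\ref{problem1a} for the unknown $w$. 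Finally, \eqref{nier4} is exactly condition \eqref{WF}(b) for these data (with $m_A=m_{\a}$, $\alpha_A=\alpha_{\a}$, $\alpha_{\tilde\varphi}=\max\{1,L_p\}$, $m_J=\alpha_j$, $\|M\|=\|\gamma\|$), so Theorem~\ref{twierdzenie11} yields a unique $w\in\v$; then $u:=u_0+\int_0^\cdot w\in\w$ with $u(0)=u_0$ and $\sigma:=\a(\cdot,\varepsilon(w))+z\in L^2(0,T;\h)$ form the unique solution of Problem~\ref{model2}.

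I expect the main obstacle to be the elimination of the stress and the verification that $\s$ is history-dependent: the viscoplasticity operator makes $z$ depend on $u$ through a Volterra equation and $u$ on $w$ through an integration, so the estimate \eqref{wS} for $\s_1$ requires chaining the Banach/Gronwall argument for the Volterra solution with the history-dependence of $w\mapsto u$ and tracking the constants carefully. By comparison, the passage from the pointwise hypotheses \eqref{wj}(c)--(e) on $j_\tau$ to \eqref{wJ}(c)--(e) for the integral functional $J$, and the verifications of \eqref{wA}, \eqref{fi01} and \eqref{WF}(a), are routine.
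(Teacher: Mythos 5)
Your proposal follows essentially the same route as the paper: pass to the velocity $w=u'$, eliminate the stress via the Volterra/Gronwall argument (the paper's Lemma~\ref{OPHISIGMA}), package the elasticity, viscoplasticity and memory contributions as history-dependent operators, absorb the normal-compliance term $p$ into the second argument of $\tilde\varphi$ together with the history-dependent term, and invoke Theorem~\ref{twierdzenie11} with $m_A=m_{\a}$, $\alpha_A=\alpha_{\a}$, $m_J=\alpha_j$ and $M=\gamma$. The only cosmetic difference is that you treat $z=\b(\cdot,\varepsilon(u))+\sigma^I(u)$ as a single history-dependent operator, whereas the paper splits it into $\s_1+\s_2$ and recombines via Lemma~\ref{PwS}.
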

\begin{proof} The proof of this theorem will be carried out in two steps.\\
{\bf{Step 1.}} We need the following auxiliary result.
\begin{lemat}\label{OPHISIGMA}
Assume that \eqref{wB} and \eqref{wC} hold. Then, for all $u \in \v,$ there exists a unique function $\sigma^{I}(u) \in L^2(0,T;\h)$ such that
\begin{equation}\label{SI}
\sigma^I(u(t)) = \int\limits_{0}^{t}\,\g\big(s,\b(t,\varepsilon(u(s))) + \sigma^I(u(s)), \varepsilon(u(s))\big)\,ds
\end{equation}
for a.e. $t\in (0,T).$ Moreover, if $u_1, u_2 \in \v,$ then
$$
\Vert \sigma^I(u_1)(t) - \sigma^I (u_2)(t) \Vert_{\h} \leqslant L_{\sigma^I}\,\int\limits_{0}^{t}\,\Vert u_1(s) - u_2(s) \Vert_V\,ds
$$
for a.e. $t \in (0,T)$ with $L_{\sigma^I}>0.$
\end{lemat}
\begin{proof}
The proof of the lemma is presented in Lemma 6.1 in \cite{CMOS}.
\end{proof}

In order to formulate an equivalent form of Problem \ref{model2}, we use Lemma \ref{OPHISIGMA}. We consider the following intermediate problem.
\begin{problem}\label{model2I}
Find $u \in \w$ such that $u(t) \in K,\ \sigma \in L^2(0,T;\h)$  and
\begin{align*}
\begin{split}
&\sigma(t) = \a(t,\varepsilon(u'(t)) + \b(t,\varepsilon(u(t))+ \sigma^I(u(t))\ \mbox{a.e.}\ t \in (0,T)\\
&(\sigma(t), \varepsilon(v) - \varepsilon(u'(t)))_{\h} + \int\limits_{\Gamma_3}{}\,p(u'_{\nu}(t))(v_{\nu} - u'_{\nu}(t))\,d \Gamma + \int\limits_{\Gamma_3}\,\Big( \int\limits_{0}^{t}\,b(t-s) (u^{'}_{\nu})^{+}(s)\,ds \Big)(v_{\nu} - u'_{\nu}(t))\,d \Gamma\\
&+ \int\limits_{\Gamma_3}^{}\,j^0_{\tau}(t,u'_{\tau}(t);v_{\tau} - u'_{\tau}(t))\,d \Gamma \geqslant \langle f(t), v - u'(t)\rangle_{V^* \times V}
\end{split}
\end{align*}
for all $v \in K$ and a.e. $t \in (0,T)$ with $u(0) = u_0,$ where $\sigma^I(u) \in L^2(0,T;\h)$ is the unique function defined in Lemma \ref{OPHISIGMA}.
\end{problem}
{\bf{Step 2.}} Let $u' = w$. We define the operator $\r: \v \longrightarrow \v$ such that
\begin{equation}\label{OPHIR}
(\r w)(t) = \int\limits_{0}^{t}\,w(s)\,ds + u_0\ {\rm for}\ w \in \v,\ \mbox{a.e.}\ t \in (0,T).
\end{equation}
Hence, Problem \ref{model2I} can be formulated as follows.

\begin{problem}\label{ap1}
Find $w \in \v$ such that $w(t) \in K,\ \sigma \in L^2(0,T;\h)$ and
\begin{align}\label{AP1}
\begin{split}
\sigma(t) = \a(t,\varepsilon(w(t))) + \b(t,\varepsilon((\r w)(t)) + \sigma^I((\r w)(t))\ \mbox{a.e.}\ t \in (0,T)\qquad \qquad \quad
\end{split}
\end{align}
\begin{align}\label{AP2}
\begin{split}
&\big(\sigma(t) , \varepsilon(v) - \varepsilon(w(t)) \big)_{\h} 
+ \int\limits_{\Gamma_3}{}\,p(w_{\nu}(t))(v_{\nu} - w_{\nu}(t))\,d \Gamma\\ &+ \int\limits_{\Gamma_3}\,\Big( \int\limits_{0}^{t}\,b(t-s) w^{+}_{\nu}(s)\,ds \Big)(v_{\nu} - w_{\nu}(t))\,d \Gamma + \int\limits_{\Gamma_3}^{}\,j^0_{\tau}(t,w_{\tau}(t);v_{\tau} - w_{\tau}(t))\,d \Gamma\\ &\geqslant \langle f(t), v - w(t)\rangle_{V^* \times V}\ \ \mbox{for all}\ v \in K\ \mbox{and a.e.}\ t \in (0,T).
\end{split}
\end{align}
\end{problem}

Combining \eqref{AP1} and \eqref{AP2}, we obtain the following problem.

\begin{problem}\label{ap}
Find $w \in \v$ such that $w(t) \in K$ and
\begin{align*}
\begin{split}
&\big( \a(t,\varepsilon(w(t))), \varepsilon(v) - \varepsilon(w(t)) \big)_{\h} +\big( \b(t,\varepsilon((\r w)(t))) + \sigma^I((\r w)(t)), \varepsilon(v) - \varepsilon(w(t)) \big)_{\h}\\
&+ \int\limits_{\Gamma_3}{}\,p(w_{\nu}(t))(v_{\nu} - w_{\nu}(t))\,d \Gamma + \int\limits_{\Gamma_3}\,\Big( \int\limits_{0}^{t}\,b(t-s) w^{+}_{\nu}(s)\,ds \Big)(v_{\nu} - w_{\nu}(t))\,d \Gamma\\
&+ \int\limits_{\Gamma_3}^{}\,j^0_{\tau}(t,w_{\tau}(t);v_{\tau} - w_{\tau}(t))\,d \Gamma \geqslant \langle f(t), v - w(t)\rangle_{V^* \times V}
\end{split}
\end{align*}
for all $v \in K$ and a.e. $t \in (0,T).$
\end{problem}
Next, we introduce the operator $A \colon (0,T) \times V \longrightarrow V^*$ defined by
\begin{equation}\label{opB}
\langle A(t,u),v\rangle_{V^* \times V} = \big( \a (t, \varepsilon(u)),\varepsilon(v) \big)_{\h}
\end{equation}
for all $u,v \in V$ and a.e. $t\in (0,T).$ The operator $A$ satisfies \eqref{wA}(b)--(d) with $m_A = m_{\a} >$ $0$, $a_0(t)= \sqrt{2}\, \Vert \overline{a}_0(t) \Vert_{L^2(\Omega)},\ a_1 = \sqrt{2}\, \overline{a}_1 > 0$ (see \cite{MOSBOOK}, p.~205 and \cite{MOS13}, p.~3394). Now, we prove the property \eqref{wA}(f). Let $u_0 \in K$ be given. Using the Cauchy-Schwartz inequality and the conditions \eqref{wkA}(d),(e), we get
\begin{align*}
\begin{split}
\langle A(t,u), u &- u_0 \rangle_{V^* \times V} = \big(\a(t,\varepsilon(u)),\varepsilon(u) - \varepsilon(u_0)\big)_{\h} = \big(\a(t,\varepsilon(u)),\varepsilon(u)\big)_{\h} + \big(\a(t,\varepsilon(u)),- \varepsilon(u_0)\big)_{\h}\\
&\geqslant \alpha_{\a}\,\Vert \varepsilon(u) \Vert_{\h}^2 - \Vert  \a(t,\varepsilon(u))\Vert_{\h}\,\Vert \varepsilon(u_0) \Vert_{\h} = \alpha_{\a}\,\Vert u \Vert_V^2 - \overline{a}_1\,\Vert u \Vert_V - \overline{a}_0(t)\,\Vert u_0 \Vert_V.
\end{split}
\end{align*}
So, we conclude that \eqref{wA}(f) holds with $\alpha_A = \alpha_{\a},\ \beta = \overline{a}_1$ and $\beta_1(t) = \overline{a}_0(t)\,\Vert u_0 \Vert_V$.

We also define the operators $\s_1, \s_2,\s_3 \colon \v \longrightarrow \v^*$  by
\begin{align}\label{ops1}
\begin{split}
&\langle (\s_1 w)(t),v \rangle_{V^* \times V} = \big( \b(t,\varepsilon((\r w)(t))), \varepsilon(v) \big)_{\h}
\\
&\langle (\s_2 w)(t),v \rangle_{V^* \times V} =  \big( \sigma^I((\r w)(t)), \varepsilon(v) \big)_{\h},
\\
&\langle (\s_3 w)(t),v \rangle_{V^* \times V} = \int\limits_{\Gamma_3}^{}\,\big( \int\limits_{0}^{t}\,b(t-s)w^+_\nu(s)\,ds \big)v_\nu\,d\Gamma
\end{split}
\begin{minipage}{0.1\linewidth}
$\left.
\begin{tabular}{c}
 \\ \\ \\ \\
\end{tabular}
\right\}$
\end{minipage}
\end{align}
for all $w \in \v,\ v \in V$ and a.e. $t\in (0,T).$ The hypotheses \eqref{wB}, \eqref{wC}, \eqref{wopb} and the definition \eqref{ops1} imply that the following inequalities hold (cf. \cite{CMOS}).

\begin{align*}
\big( \b (t, \varepsilon ((\mathcal{R} w_1)(t) )) - \b (t, \varepsilon ((\mathcal{R} w_2)(t) )), \varepsilon(v) \big)_{\h} \leqslant L_{\b}\,\Big(\int\limits_{0}^{t}\,\Vert w_1(s) - w_2(s) \Vert_V\,ds \Big)\Vert v \Vert_V,
\end{align*}
\begin{align*}
\begin{split}
&\big( \sigma^I((\r w_1)(t)) - \sigma^I((\r w_2)(t)), \varepsilon(v) \big)_{\h} \leqslant c\,T\,\Big(  \int_{0}^{t}\,\Vert w_1(s) - w_2(s) \Vert_V\,ds \Big)\,\Vert v \Vert_V,
\end{split}
\end{align*}
\begin{align*}
\begin{split}
\int\limits_{\Gamma_3}^{} \Big(\int\limits_{0}^{t}& b(t-s) (w_{1 \nu}^+ (s) - w_{2 \nu}^+(s))\, ds \Big)v_\nu\, d \Gamma\\ &\leqslant \Vert b \Vert_{L^1(0,T;L^\infty(\Gamma_3))}\,\Vert \gamma \Vert^2 \Big(\int\limits_{0}^{t}\,\Vert w_1(s) - w_2(s) \Vert_V\,ds \Big)\Vert v \Vert_V
\end{split}
\end{align*}
for $w_1$, $w_2 \in \v$, $v \in V$, a.e. $t \in (0,T)$.
Hence, the operators $\s_1, \s_2$ and $\s_3,$ defined by \eqref{ops1} satisfy \eqref{wS} with $L_{\s_1} = L_{\b}, L_{\s_2} = c\,T$ and $L_{\s_3} = \Vert \gamma \Vert^2 \Vert b \Vert_{L^1(0,T; L^{\infty}(\Gamma_3))},$ respectively. Moreover, from Lemma \ref{PwS}, we conclude that the operator $\s \colon \v \longrightarrow \v^*$ defined by
$
\langle (\s w)(t),v \rangle_{V^* \times V} = \sum_{i=1}^3\,\langle (\s_i w)(t), v\rangle_{V^* \times V}
$
for all $w \in \v,\ v \in V$ and a.e. $t\in (0,T)$ satisfies \eqref{wS} with 
$L_{\s} = L_{\b} + c\,T + \Vert \gamma \Vert^2 \Vert b \Vert_{L^1(0,T; L^{\infty}(\Gamma_3))}.$
Next, we define the operator $P \colon V \longrightarrow V^*$ by
\begin{equation}\label{opp}
\langle P(u), v \rangle_{V^* \times V} = \int\limits_{\Gamma_3}{}\,p(u_{\nu})v_{\nu}\,d \Gamma
\end{equation}
for all $u,v \in V.$  From \eqref{wpa}(b) and the H\"older inequality, we see that
\begin{align*}
\begin{split}
&\langle P(u) - P(v), u - v \rangle_{V^* \times V} \leqslant \int\limits_{\Gamma_3}^{}\,(p(u_\nu) - p(v_\nu))(v_\nu - u_\nu)\,d\Gamma\\ &\leqslant \Vert p(u_\nu) - p(v_\nu)\Vert_{L^2(\Gamma_3)}\,\Vert u_\nu - v_\nu\Vert_{L^2(\Gamma_3)} \leqslant L_p\,\Vert u_\nu - v_\nu\Vert_{L^2(\Gamma_3)}\,\Vert u_\nu - v_\nu\Vert_{L^2(\Gamma_3)}\\
&\leqslant L_p\,\Vert \gamma \Vert^2\,\Vert u - v \Vert_{V}\,\Vert u - v \Vert_{V}.
\end{split}
\end{align*}
Hence, we conclude that the operator $P$ is Lipschitz continuous with $L_P = L_p \, \Vert \gamma \Vert^2$.

Finally, we define the functional $J \colon (0,T) \times L^2(\Gamma_3; \R^d) \longrightarrow \R$ by
\begin{equation}\label{opJ}
J(t,u)= \int\limits_{\Gamma_3}^{}\,j_\tau(x,t, u_\tau(x))\,d \Gamma
\end{equation}
for all $u \in L^2(\Gamma_3; \R^d)$ and a.e.\,$t \in (0,T).$ Under the assumption \eqref{wj} the functional $J\colon (0,T) \times L^2(\Gamma_3; \R^d) \longrightarrow \R$ defined above satisfies \eqref{wJ} with
$
c_0 = \sqrt{2\, {\rm meas}(\Gamma_3)}\,\overline{c}_0,\ c_1 = \sqrt{2}\,\overline{c}_1, \ d_0= \overline{d}_0 \geqslant 0\ \mbox{and}\ m_J = \alpha_j \Vert \gamma \Vert^2.
$ (see \cite{MOS1}, p. 280).
Under the above notation Problem \ref{model2} can be written in the following equivalent form.
\begin{equation}\label{ll}
\left.
\begin{array}{l}
\hspace{-0.2cm} 
 {\rm Find}\ w \in \v\ {\rm such\ that}\ w(t) \in K\ {\rm and}\\ [2mm]
\langle A(t,w(t)), v - w(t) \rangle_{V^* \times V} + \langle P(w(t)), v - w(t) \rangle_{V^* \times V} \\ [2mm]
+\langle (\s w)(t), v - w(t) \rangle_{V^* \times V} + J^0(t,\gamma w(t);\gamma v - \gamma w(t)) \geqslant \langle f(t), v - w(t)\rangle_{V^* \times V}\\[2mm]
{\rm for\ all}\ v \in K\ {\rm and\ a.e.}\ t \in (0,T). 
\end{array}
\right\}
\end{equation}
We introduce the function $\tilde \varphi \colon V^* \times K \times K \longrightarrow \R$ defined by
\begin{equation}\label{operatorp}
\tilde \varphi (z, u, v) = \langle z, v \rangle_{V^* \times V} + \langle u, v \rangle_{V^* \times V}
\end{equation}
for all $z \in V^*, u,v \in K.$
Hence and from the Cauchy-Schwartz inequality, we have
\begin{align*}
\begin{split}
\tilde \varphi(z_1,&u_1,v_2)-\tilde \varphi(z_1,u_1,v_1)+\tilde\varphi(z_2, u_2,v_1)-\tilde \varphi(z_2,u_2,v_2)\\ &= \langle z_1 - z_2, v_2 - v_1 \rangle_{V^* \times V} + \langle u_1 -  u_2, v_2 - v_1 \rangle_{V^* \times V}\\ &\leqslant  (\Vert z_1 - z_2 \Vert_{V^*} + \Vert u_1 - u_2 \Vert_V )\Vert v_2 - v_1 \Vert_V
\end{split} 
\end{align*}
for all $z_1, z_2 \in V^*, u_1,u_2,v_1,v_2 \in K$. Thus, the condition \eqref{fi01} holds with $\alpha_{\tilde \varphi} = 1$. 
Using the definition of the function \eqref{operatorp} and the fact that $M = \gamma$, Problem \ref{ll} has the following form.
\begin{equation}\label{nier}
\left.
\begin{array}{l}
\hspace{-0.2cm} 
{\rm Find}\ w \in \v\ {\rm such\ that}\ w(t) \in K\ {\rm and} \\ [2mm]
 \langle A(t,w(t)), v - w(t) \rangle_{V^* \times V} + \tilde \varphi((\s w)(t),w(t),v)\\[2mm]
 -\tilde \varphi((\s w)(t),w(t),w(t))+ J^0(t,M w(t);M v - M w(t)) \geqslant \langle f(t), v - w(t)\rangle_{V^* \times V}\\[2mm]
 {\rm for\ all}\ v \in K\ {\rm and\ a.e.}\ t \in (0,T). 
\end{array}
\right\}
\end{equation}
We observe that the condition \eqref{nier4} implies \eqref{WF}(b) with $m_A = m_{\a}$, $\alpha_A = \alpha_{\a},$ $\alpha_{\varphi} = \mbox{max}\,\lbrace 1, L_P \rbrace$, $m_J = \alpha_j$ and $M = \gamma$. Now, applying  Theorem \ref{twierdzenie11} (cf. Section \ref{s31}), we deduce  that there exists a unique function $w \in \v$  that solves \eqref{nier}. From this and the definitions \eqref{opB}, \eqref{ops1}, \eqref{opp}, \eqref{opJ} and \eqref{operatorp}, we deduce that the pair $(w,\sigma) \in \v \times L^2(0,T;\h)$ is a solution to Problem \ref{ap1}. Let $u(t) = (\mathcal{R} w)(t)$ for a.e.\;$t \in (0,T)$ and $w=u'$. Thus, we conclude that the pair $(u, \sigma) \in \w \times L^2(0,T;\h)$ solves Problem \ref{model2I}.  Hence and Lemma \ref{OPHISIGMA}, we deduce that the pair $(u, \sigma) \in \w \times L^2(0,T;\h)$ is a solution to Problem \ref{model2}. The proof of the theorem is complete.
\end{proof}

A couple of functions $(u,\sigma)$ which satisfies \eqref{constitutivelaw}--\eqref{po} is called a {\it weak solution} to Problem \ref{model2}. We conclude that, under
the assumptions of Theorem \ref{twierdzeniemodel1}, Problem \ref{model2} has a unique weak solution with regularity $u \in W^{1,2}(0,T;V)$ and $\sigma \in L^2(0,T;\h)$. We observe, that the regularity of the stress field is, in fact, $\sigma \in L^2(0,T;\h_1)$. Indeed, using \eqref{equationofmotion} and \eqref{wf1f2}, we deduce that Div$\sigma \in L^2(0,T;L^2(\Omega;\R^d))$ and hence $\sigma \in L^2(0,T;\h_1).$

Research supported by the Marie Curie International Research Staff Exchange Scheme Fellowship within the 7th European Community Framework Programme under Grant Agreement No. 295118 and the National Science Center of Poland under the Maestro Project no. DEC-2012/06/A/ST1/00262.

\end{document}